 \newtheorem{theorem}{Theorem}[section]
 \newtheorem{Def}[theorem]{Definition}
 \newtheorem{Prop}[theorem]{Proposition}
 \newtheorem{Lem}[theorem]{Lemma}
 \newtheorem{Cor}[theorem]{Corollary}
 \newtheorem{Rem}[theorem]{Remark}
  \newtheorem{Exa}[theorem]{Example}
\newcommand{\ba}{\begin{array}}
\newcommand{\ea}{\end{array}}
\newcommand{\beq}{\begin{equation}}
\newcommand{\eeq}{\end{equation}}
\newcommand{\set}[2]{\left\{ #1 : #2 \right\}}
\newcommand{\RR}{\mathbb{R}}
\newcommand{\ZZ}{\mathbb{Z}}
\newcommand{\NN}{\mathbb{N}}
 \numberwithin{equation}{section}
\begin{document}

\title{ Boundaries of disk-like self-affine tiles}


\author{King-Shun Leung} \address{ Department of Mathematics and Information Technology,
The Hong Kong Institute of Education, Hong Kong}
\email{ksleung@ied.edu.hk}

\author{Jun Jason Luo} \address{Department of Mathematics, Shantou University, Shantou, Guangdong 515063, China }
\email{luojun2011@yahoo.com.cn}

\thanks{The research is supported by STU Scientific Research Foundation for Talents (no. NTF12016).}
\keywords{boundary, self-affine tile, sofic system, number system,
neighbor graph, contact matrix, graph-directed set, Hausdorff dimension.}

\date{\today}

\begin{abstract}
Let $T:= T(A, {\mathcal D})$ be a  disk-like self-affine tile generated by an integral expanding matrix $A$ and a consecutive collinear digit set ${\mathcal D}$, and let $f(x)=x^{2}+px+q$ be the characteristic polynomial of $A$. In the paper, we identify the boundary $\partial T$ with a sofic system by constructing a neighbor graph and  derive equivalent conditions for the pair $(A,{\mathcal D})$ to be a number system. Moreover, by using the graph-directed construction and a device of  pseudo-norm $\omega$, we find the generalized Hausdorff dimension $\dim_H^{\omega} (\partial T)=2\log \rho(M)/\log |q|$ where $\rho(M)$ is the spectral radius of certain contact matrix $M$. Especially, when $A$ is a similarity, we obtain the standard Hausdorff dimension $\dim_H (\partial T)=2\log \rho/\log |q|$ where $\rho$ is the largest positive zero of the cubic polynomial $x^{3}-(|p|-1)x^{2}-(|q|-|p|)x-|q|$, which is simpler than the known result.
\end{abstract}

\maketitle

\begin{section} {\bf Introduction}
Let $M_n(\Bbb Z)$ denote the set of $n\times n$ matrices with
entries in $\Bbb Z$ and let $A\in M_n(\Bbb Z)$ be expanding (i.e.,
all eigenvalues of $A$ have moduli $> 1$). Assume $|\det(A)|= |q|$,
and ${\mathcal D}=\{0,d_1,\dots, d_{|q|-1}\}\subset {\Bbb Z}^n$ with
$|q|$ distinct vectors. We call ${\mathcal D}$ a {\it digit set} and
$(A, {\mathcal D})$ a {\it self-affine pair}. It is well-known that
there exists a unique self-affine set $T:= T(A, {\mathcal D})$
\cite{[LW1]}  satisfying
$$T=A^{-1}(T+{\mathcal D})=\left\{\sum_{i=1}^{\infty}A^{-i}d_{j_i}: d_{j_i}\in
{\mathcal{D}}\right\}.$$ If $T$ has non-void interior, then there
exists a subset ${\mathcal J}\subset {\Bbb Z}^n$ such that
$$T+{\mathcal J}={\Bbb R}^n\quad \text{and}\quad (T+t)^\circ\cap
(T+t')^\circ=\emptyset, \ t\ne t',\ t,t'\in {\mathcal J},$$ thus $T$
is called a {\it self-affine tile} and ${\mathcal J}$ a tiling set.
$T+{\mathcal J}$ is called a tiling of ${\Bbb R}^n$, and a lattice
tiling if ${\mathcal J}$ is a lattice \cite{[LW3]}.

The topological properties of self-affine tiles and their boundaries, such as
connectedness, local connectedness or disk-likeness (i.e., homeomorphic to the closed
unit disk),  have attracted a lot of interest.  A systematical study
on the connectedness of self-affine tiles was due to  Kirat and Lau \cite{[KL]}, they mainly
concerned  a class of  tiles $T(A, {\mathcal D})$ generated by the {\it
consecutive collinear (CC)} digit sets ${\mathcal D}:={\mathcal
D}(v,|q|)=\{0,1,\dots,|q|-1\}v, v\in {\mathbb Z}^n\setminus\{0\}$
via the algebraic property of the characteristic polynomial of the
matrix $A$. More general cases on non-consecutive collinear or
non-linear digit sets were considered by \cite{[K]}, \cite{[DL]}, \cite{[LLu]}.

The question of disk-likeness was first investigated by Bandt and
Gelbrich \cite{[BG]} for self-affine tiles in ${\Bbb R}^2$ with
$|\det(A)|=2$ or $3$.  They observed that the characteristic
polynomial of $A\in M_2(\Bbb Z)$ is of the form:
$$f(x)=x^2+px+q, ~\text{with}~ |p|\leq q, ~\text{if}~ q\geq
2;\quad |p|\leq |q+2|, ~\text{if}~ q\leq -2.$$  By studying the
neighborhood structure of $T$, Bandt and Wang \cite{[BW]} proved
that a tile $T$ with no more than six neighbors is disk-like if and
only if $T$ is connected.  A translation of the tile $T+\ell,\
\ell\in {\mathcal J}$ is called  a {\it neighbor} of $T$ if $T\cap
(T+\ell)\ne \emptyset$.  Making use of this criterion, Leung and Lau
\cite{[LL]} then gave a complete characterization of the
disk-likeness of self-affine tiles with CC digit sets.  Gmainer and Thuswaldner \cite{[GM]} considered the disk-likeness of tiles with non-collinear digit sets arising from polyominoes, and Kirat \cite{[K]} proposed necessary and sufficient conditions for such tiles to be disk-like in general. By using the neighbor map technique, Bandt and Mesing \cite{[BM]} constructed a kind of finite type self-affine tiles and discussed their disk-likeness as well.

\begin{theorem}(\cite{[LL]})\label{disk-likeness thm}
Let $A\in M_2(\Bbb Z)$ be an expanding matrix with characteristic
polynomial $f(x)=x^2+px +q$. Then for any CC digit set ${\mathcal
D}(v, |q|)$ in ${\Bbb Z}^2$ such that $v, Av$ are linearly
independent, $T$ is a disk-like tile if and only if $2|p|\leq
|q+2|$.

Moreover, when $p=0$, $T$ is a square tile; when $p\ne 0$, $T$ is a
hexagonal tile.
\end{theorem}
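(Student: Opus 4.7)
My plan is to split the disk-likeness question into two independent pieces — connectedness and the size of the neighbor set — and then invoke the Bandt--Wang criterion recalled in the introduction: for a self-affine tile with at most six neighbors, disk-likeness is equivalent to connectedness. Since $v$ and $Av$ are linearly independent and $|\det A| = |q|$, a standard volume count identifies the tiling set as the lattice $\Gamma := \mathbb{Z} v \oplus \mathbb{Z} A v$. Hence the neighbors of $T$ are exactly the nonzero $\ell = m v + n A v \in \Gamma$ for which $T \cap (T+\ell) \ne \emptyset$, equivalently
\[
\ell \in \sum_{i=1}^{\infty} A^{-i} (\mathcal{D} - \mathcal{D}) = \sum_{i=1}^\infty A^{-i} \{-(|q|-1),\dots,|q|-1\}\, v .
\]

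The technical heart of the argument is an explicit enumeration of this neighbor set. I would use the Cayley--Hamilton relation $A^2 v = -p A v - q v$ to expand each $A^{-i} v$ in the basis $\{v, A v\}$ via a linear recurrence in $p$ and $q$; the existence of a neighbor $\ell = m v + n A v$ then becomes a number-system style question: which integer pairs $(m,n)$ admit an admissible expansion with digits in $\{-(|q|-1),\dots,|q|-1\}$? A case analysis on the sign of $q$ and on the ratio $2|p|/|q+2|$ should yield: if $p=0$ the set of neighbors is exactly $\{\pm v,\ \pm A v\}$ (four neighbors); if $p\neq 0$ and $2|p| \le |q+2|$ the neighbors form a symmetric six-element set obtained by adjoining $\pm(v+Av)$ or $\pm(v-Av)$ according to the signs of $p$ and $q$; and if $2|p| > |q+2|$ strictly more than six lattice points qualify.

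For the forward implication, under $2|p|\le |q+2|$ the hypotheses of Kirat--Lau's connectedness theorem for CC tiles are satisfied (the criteria $|p|\le q$ for $q\ge 2$ and $|p|\le |q+2|$ for $q\le -2$ are both weaker than $2|p|\le |q+2|$), so $T$ is connected; combined with the six-neighbor bound, Bandt--Wang \cite{[BW]} yields disk-likeness, and the explicit neighbor count (four versus six) immediately distinguishes the square from the hexagonal shape, settling the ``moreover'' clause. Conversely, if $2|p|>|q+2|$ the enumeration above supplies at least seven neighbors; this is incompatible with $T$ being a topological disk, since a closed topological disk that lattice-tiles $\mathbb{R}^2$ can have at most six neighbors (a standard Euler-characteristic argument on the torus $\mathbb{R}^2/\Gamma$, combined with the fact that each vertex of the resulting cell complex lies in at least three tiles).

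The principal obstacle is the neighbor enumeration itself. The linear recursion for $A^{-i} v$ is routine, but converting the geometric condition $T \cap (T+\ell)\ne\emptyset$ into a sharp arithmetic criterion on $(m,n)$ requires a careful study of admissible digit expansions, and the split over the signs of $p$ and $q$ together with the threshold case $2|p| = |q+2|$ is where the bookkeeping is most delicate. Once the exact neighbor set is pinned down, the disk-likeness conclusion and the square/hexagon dichotomy fall out by directly citing Kirat--Lau and Bandt--Wang.
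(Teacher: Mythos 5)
The paper itself offers no proof of this statement---it is quoted verbatim from \cite{[LL]}---so the comparison is with the argument given there, whose broad strategy (identify the tiling lattice $\ZZ v\oplus\ZZ Av$, enumerate the neighbors via admissible digit expansions of $\ell$ in $\Delta{\mathcal D}$, then apply Bandt--Wang) your proposal does share. However, there is a genuine gap at the heart of your converse. You assert that a closed topological disk which lattice-tiles $\RR^2$ has at most six neighbors; this is false. The standard square tiling already has eight neighbors (four edge-neighbors and four corner-neighbors), and indeed the paper's own $p=0$ tiles are disk-like with exactly the eight neighbors $\{\pm v,\pm Av,\pm(Av-v),\pm(Av+v)\}$ (see Figures \ref{fig:1g} and \ref{fig:2g} in Appendix A). The correct topological input, due to Bandt--Gelbrich, is that a disk-like lattice tile has either six neighbors of the form $\{\pm\alpha,\pm\beta,\pm(\alpha+\beta)\}$ or eight neighbors of the form $\{\pm\alpha,\pm\beta,\pm(\alpha+\beta),\pm(\alpha-\beta)\}$. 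Hence, to rule out disk-likeness when $2|p|>|q+2|$, one must show that the neighbor set has neither of these two shapes---which is what \cite{[LL]} actually does---rather than merely exhibit a seventh neighbor; your Euler-characteristic argument, as stated, proves nothing.

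The same oversight breaks your forward direction when $p=0$: since the tile then has eight neighbors rather than the four you claim, the six-neighbor Bandt--Wang criterion recalled in the introduction does not apply, and one must invoke Bandt--Wang's separate criterion for eight-neighbor (parallelogram-type) tiles to conclude disk-likeness of the square case. A smaller inaccuracy: your guessed six-element neighbor set for $p\neq 0$ is correct only for $|p|=1$; for $|p|\geq 2$ it is $\{\pm v,\ \pm(Av+(|p|-1)v),\ \pm(Av+|p|v)\}$ up to the signs of $p$ and $q$, cf.\ (\ref{six neighbors}) and Appendix A, so the delicate bookkeeping you defer would in fact overturn the set you propose. The remaining ingredients---the identification of the tiling lattice, connectedness via Kirat--Lau, and the reduction of $T\cap(T+\ell)\neq\emptyset$ to an arithmetic condition on digit expansions---are sound and consistent with \cite{[LL]}.
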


The boundary of a self-affine tile has more complicated geometric structure than the tile itself, hence it is also of considerable interest. The dimension of the boundary of a self-similar tile (where the expanding matrix $A$ is a similarity) has been studied extensively in the literature. Strichartz and Wang \cite{[SW]} described the boundary set as a graph-directed set and gave an algorithm for finding the dimension of the boundary,  various other methods can be founded in \cite{[DKV]}, \cite{[V]}, \cite{[HLR]}, \cite{[LN]}.

Recently,  Akiyama and Loridant (\cite{[AL]}, \cite{[AL2]}) provided
a new method to parameterize the boundary set and reproved Theorem \ref{disk-likeness thm} by showing that the
boundary of $T$ is a simple closed curve. In the present paper, we go further to explore the structure of the boundary of the $T$
defined in Theorem \ref{disk-likeness thm}. For convenience, we call
such $T$ a {\it CC tile}. If it is also disk-like, we call it a {\it
disk-like CC tile}.

First we establish a neighbor graph of $T$ such that the boundary $\partial T$ is identified as the union of all one-sided infinite paths of this graph. Hence $\partial T$ determines a sofic system \cite{[Fi]}. The neighbor graph technique is classical in the study of tiling theory (\cite{[BG]}, \cite{[BM]}).  However, it will be shown that we use the technique here from a different aspect.  As self-affine tiles can be studied in the context of number systems \cite{[MTT]}, it is worth studying the conditions for the self-affine pair $(A,{\mathcal D})$ to be a number system. We give the answer when $T(A,{\mathcal D})$ is disk-like.

\begin{theorem}\label{thm2}
Let $T=T(A,{\mathcal D})$ be a disk-like CC tile. Then the following
are equivalent:

(i) $(A,{\mathcal D})$ is a number system.

(ii) $0\in T^{\circ}$.

(iii) $f(x)=x^{2}+px+q$ with $-1\leq p$ and $q\geq 2$.

(iv) For all neighbors $T+\ell$, \
$\ell=\sum_{i=0}^{k}a_{i}A^{i}v\in {\mathcal D}_{A,k+1}$ for some
$k\in\ZZ$ with $a_k=1$ and $a_{i}\in D$ where $0\leq i<k$.
\end{theorem}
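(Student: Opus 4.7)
The plan is to prove the four-way equivalence by the cycle $(iii) \Rightarrow (iv) \Rightarrow (i) \Rightarrow (ii) \Rightarrow (iii)$, leveraging the neighbor graph constructed in the preceding section together with the disk-likeness classification of Theorem~\ref{disk-likeness thm}. The equivalence $(i) \Leftrightarrow (ii)$ is classical for self-affine pairs admitting a lattice tiling: if every $x \in \mathcal{J} := \ZZ v + \ZZ Av$ has a finite expansion $x = \sum_{i=0}^k A^i d_i$, then $A^{-k-1}x \in T$, so points of $T$ accumulate at $0$ from every direction and the tiling property $T + \mathcal{J} = \RR^2$ forces $0 \in T^\circ$; conversely, an open neighborhood $U \subset T$ of $0$ feeds the usual radix algorithm and guarantees termination. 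I would cite this standard argument rather than reprove it in detail.

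The novel content lies in linking $(iii)$ and $(iv)$. For $(iii) \Rightarrow (iv)$, combining $p \geq -1$, $q \geq 2$ with the disk-likeness inequality $2|p| \leq |q+2|$ restricts $(p,q)$ to the strip $-1 \leq p \leq (q+2)/2$, $q \geq 2$. A disk-like CC tile is square (when $p=0$) or hexagonal (when $p \neq 0$), so it has at most six neighbors whose $\{v, Av\}$-coordinates can be read off from the neighbor graph. For each such $\ell$, one applies the reduction $A^2 v = -pAv - qv$ iteratively to rewrite it as $\sum_{i=0}^k a_i A^i v$ with $a_k = 1$ and $a_i \in D = \{0,1,\ldots,|q|-1\}$ for $i < k$; the sign hypothesis is precisely what keeps all intermediate carries non-negative and bounded above by $|q|-1$, so the procedure terminates. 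The implication $(iv) \Rightarrow (i)$ then bootstraps by a pseudo-norm descent (the $\omega$-technique advertised in the abstract): given $x \in \mathcal{J}$, one iteratively replaces $x$ by $A^{-1}(x - d_0)$ for a suitable $d_0 \in \mathcal{D}$ until the iterate lies among the finitely many neighbor translates, at which point $(iv)$ completes the expansion.

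The step $(ii) \Rightarrow (iii)$ is where I expect the main difficulty. Arguing contrapositively, if $p \leq -2$ or $q \leq -2$, one must force $0 \in \partial T$, equivalently produce an infinite address $(d_{j_i})_{i \geq 1}$ with $\sum_{i \geq 1} A^{-i} d_{j_i} = 0$ whose shift orbit tracks a cycle in the neighbor graph visiting a state labeled by $0$. The main obstacle is the non-uniform behavior across the forbidden sign regimes, especially the $q \leq -2$ case where the mixed signs in the Cayley--Hamilton identity $A^2 v = -pAv - qv$ obstruct a uniform carry analysis and demand a separate treatment. The approach I would take is to solve the finite cycle equation $\sum_{i=1}^n A^{-i} d_{j_i} = 0$ over the few residual $(p,q)$ values allowed by disk-likeness and exhibit an explicit eventually-periodic expansion witnessing $0 \in \partial T$, thereby ruling out $(ii)$ and closing the loop.
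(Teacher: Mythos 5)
Your overall architecture (neighbor-graph case analysis, explicit rewriting for (iii)$\Rightarrow$(iv), a digit-algorithm descent for (iv)$\Rightarrow$(i)) is workable, but there is a genuine gap in the step (ii)$\Rightarrow$(iii), which you yourself identify as the crux. You assert that $0\in\partial T$ is \emph{equivalent} to the existence of a nontrivial address $0=\sum_{i\geq 1}A^{-i}d_{j_i}$ with $d_{j_i}\in{\mathcal D}$. It is not: stripping leading zeros from such an address yields $-d\in T$ for some nonzero digit $d=jv$, $1\leq j\leq |q|-1$, whereas $0\in\partial T$ only requires $-\ell\in T$ for \emph{some} nonzero lattice point $\ell$, which need not be a digit. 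In every regime where (iii) fails, the witnessing $\ell$ is in fact never a digit, so the equation you propose to solve has no nontrivial solution and the contrapositive argument produces nothing. Concretely, for $f(x)=x^{2}-2x+2$ the witness is $-(Av-v)=\sum_{i\geq 1}A^{-i}v\in T$; for $f(x)=x^{2}-q$ and $x^{2}-px-q$ the witness is $v\in T$ (i.e.\ $\ell=-v$, and $-v\notin{\mathcal D}$); one checks from the neighbor graphs that there is no all-non-positive infinite path starting at $v$ in these cases, so $-jv\notin T$ for every nonzero digit $jv$. The correct criterion is the one recorded in Corollary \ref{cor:origin}: $0\in\partial T$ if and only if the neighbor graph admits an infinite path whose edge labels all have one sign, starting at an \emph{arbitrary} neighbor; the paper exhibits such constant-sign eventually periodic paths for each failing family (Table \ref{tab:path}) and verifies their absence for the families in (iii). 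Note also that the failing region is not ``a few residual $(p,q)$ values'' but several infinite families, so the verification must be carried out parametrically, family by family.

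The remaining steps are essentially sound but differ in route from the paper. For (iv)$\Rightarrow$(i) the paper does not run a descent: it proves (iv)$\Rightarrow$(ii) by the same graph inspection and then (ii)$\Rightarrow$(i) by the short lattice argument $\ZZ^{2}\cap A^{n}T={\mathcal D}_{A,n}$. Your descent can be made to work, but you must first show that every periodic point $\ell$ of the digit map $\ell\mapsto A^{-1}(\ell-d(\ell))$ satisfies $-\ell\in T\cap\ZZ^{2}$ and hence lies in ${\mathcal V}\cup\{0\}$, before (iv) can be invoked to kill the nonzero cycles; and your appeal to (i)$\Leftrightarrow$(ii) as ``classical'' replaces the paper's self-contained neighbor-graph argument for (i)$\Rightarrow$(ii). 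The step (iii)$\Rightarrow$(iv) matches the paper's explicit case-by-case rewriting using $f(A)v=0$ and $(A\pm I)f(A)v=0$.
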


In \cite{[SW]},  Strichartz and Wang applied the graph-directed iterated function system (GIFS) to represent the boundary of a self-affine tile, but they were not sure whether the GIFS satisfies the open set condition or not. Our second aim is to give a positive answer for the disk-like CC tile and estimate the generalized Hausdorff dimension ($\dim_H^{\omega}$) of the boundary by using a pseudo-norm $\omega$ (\cite{[HL]},\cite{[LY]}) instead of Euclidean norm.

\begin{theorem}\label{thm3}
The generalized Hausdorff dimension of the boundary of disk-like CC tile $T$ is $$\dim_H^{\omega}(\partial T)=\frac{2\log\rho(M)}{\log{|q|}}$$ where $\rho(M)$ denotes the spectral radius of certain contact matrix $M$,  and the corresponding measure is positive and finite.
\end{theorem}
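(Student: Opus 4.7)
The plan is to realise $\partial T$ as a graph-directed self-affine set governed by the neighbor graph constructed in the preceding sections, and then to compute its $\omega$-Hausdorff dimension through a Mauldin--Williams type argument. For each neighbor $\ell$ of $T$, set $B_\ell = T\cap(T+\ell)$, so that $\partial T=\bigcup_{\ell} B_\ell$. Iterating the self-affine identity $T=A^{-1}(T+\mathcal{D})$ yields, for every neighbor $\ell$,
\[
B_\ell \;=\; \bigcup_{\ell'} A^{-1}\bigl(B_{\ell'}+\mathcal{D}_{\ell,\ell'}\bigr),
\]
where the transition digit sets $\mathcal{D}_{\ell,\ell'}\subset \mathcal{D}-\mathcal{D}$ are read off directly from the edges of the neighbor graph established earlier. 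The weighted adjacency matrix of this GIFS is exactly the contact matrix $M$ with entries $M_{\ell,\ell'}=\#\mathcal{D}_{\ell,\ell'}$.

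Next I would invoke the pseudo-norm $\omega$ adapted to $A$ from \cite{[HL]},\cite{[LY]}: it satisfies $\omega(Ax)=|q|^{1/2}\omega(x)$ and is topologically equivalent to the Euclidean norm. With respect to $\omega$, each contraction $x\mapsto A^{-1}(x+d)$ appearing in the GIFS becomes a genuine similarity of ratio $|q|^{-1/2}$, so the classical Moran equation $\rho\bigl(|q|^{-s/2}M\bigr)=1$ yields precisely $s = 2\log\rho(M)/\log|q|$, matching the claimed formula.

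To upgrade this similarity dimension to the actual $\dim_H^\omega$ and to obtain a positive, finite $s$-dimensional measure, I would verify the open set condition for the GIFS. The natural candidate open set at vertex $\ell$ is $U_\ell := T^\circ\cap (T+\ell)^\circ$. Since $T$ is disk-like and, by the results of Akiyama--Loridant cited in the introduction, each $B_\ell$ is an arc of $\partial T$, the sets $U_\ell$ are nonempty topological disks; I would check that the images $A^{-1}\bigl(U_{\ell'}+\mathcal{D}_{\ell,\ell'}\bigr)$ lie in $U_\ell$ with pairwise disjoint $\omega$-interiors, by appealing to the uniqueness of $A$-ary expansions of interior points. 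Once OSC is in place, the graph-directed Mauldin--Williams theorem applied to the irreducible component of $M$ realising $\rho(M)$ (cf.\ \cite{[SW]}) gives both the dimension formula and the positivity and finiteness of $\mathcal{H}^s_\omega(\partial T)$.

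The main obstacle I anticipate is the uniform verification of OSC across the several subcases of disk-likeness, namely the hexagonal case $p\ne 0$ versus the square case $p=0$, together with the sign subcases of $q$ catalogued in Theorem \ref{thm2}. The sharp inequality $2|p|\le |q+2|$ from Theorem \ref{disk-likeness thm} should be used to force neighboring boundary arcs to meet only in finitely many points, so that overlaps carry zero $s$-dimensional measure. A secondary but more routine difficulty is to argue that the neighbor graph is strongly connected (or to isolate the Perron eigencomponent of $M$), so that Perron--Frobenius controls the growth of cylinder counts and the measure on each $B_\ell$ is genuinely nontrivial.
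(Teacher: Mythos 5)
Your overall strategy coincides with the paper's: realise $\partial T=\bigcup_\ell T_\ell$ as a graph-directed self-affine set read off from the neighbor graph (this is Proposition \ref{prop:union}), pass to the pseudo-norm $\omega$ so that $A^{-1}$ becomes a contraction of ratio $|q|^{-1/2}$, verify the open set condition, and then quote the dimension formula of \cite{[LY]} (Proposition \ref{thm:LuoYang}) to get $s=2\log\rho(M)/\log|q|$ together with $0<\mathcal{H}^s_\omega(\partial T)<\infty$. Up to that point you are on the paper's track, and your remark about Perron--Frobenius is not actually needed here: the cited result requires only the OSC, not irreducibility of $M$ (irreducibility only enters later, in Theorem \ref{thm:hsdf-dim}, to identify $\rho(M)$ as a root of the cubic).

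There is, however, a genuine gap at the one step that carries the real content of the proof: your candidate open sets $U_\ell:=T^\circ\cap(T+\ell)^\circ$ are \emph{empty}. By the very definition of a tiling, $(T+t)^\circ\cap(T+t')^\circ=\emptyset$ for $t\neq t'$ in the tiling set, so for every neighbor $\ell\neq 0$ the set $T^\circ\cap(T+\ell)^\circ$ is void and cannot serve as an open set for the OSC; the claim that these are ``nonempty topological disks'' is false, and the subsequent disjointness check has nothing to operate on. The paper's choice is different and is what makes the argument work: take $O_\ell=(T+\ell)^\circ$, the interior of the whole translated tile (not of the boundary piece $T_\ell$ it indexes). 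The OSC inclusions then read
\begin{equation*}
A(T+\ell)^{\circ}\supset\bigcup_{\ell'\in B_{\ell}}\bigcup_{j\in I_{b_{1}}}\bigl((T+\ell')^{\circ}+jv\bigr),
\end{equation*}
and both the inclusions and the disjointness follow from the tiling relation $AT^{\circ}\supset\bigcup_{j=0}^{q-1}(T^{\circ}+jv)$ (disjoint union) together with repeated use of $f(A)v=0$, checked case by case over the polynomials $f$. You would need to replace your $U_\ell$ by these sets (or some other genuinely nonempty family) and redo the verification; the ``arcs meet in finitely many points'' heuristic you propose does not substitute for the OSC, since Proposition \ref{thm:LuoYang} is stated under the OSC and not under a weaker overlap-of-measure-zero hypothesis.
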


 When $A$ is a similarity, we can improve  the well-known  Hausdorff dimension formula of the boundary in the following simpler way.

\begin{theorem}\label{mainthm}
Let $A\in M_{2}(\ZZ)$ be an expanding similarity with characteristic
polynomial $f(x)=x^{2}+px+q$ and $T=T(A,{\mathcal D})$ be a
disk-like CC tile. Then
$$\dim_H \big(\partial
T\big)=\frac{2\log\rho}{\log |q|}$$ where $\rho$ is the largest
positive zero of the cubic polynomial
$x^{3}-(|p|-1)x^{2}-(|q|-|p|)x-|q|.$
\end{theorem}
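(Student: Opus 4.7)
The plan is to deduce Theorem \ref{mainthm} from Theorem \ref{thm3} by combining an easy reduction of $\dim_H^{\omega}$ to $\dim_H$ with an explicit computation of $\rho(M)$ in the similarity case. Since $A\in M_2(\ZZ)$ is an expanding integer similarity with $|\det A|=|q|$, $A$ scales every Euclidean length by the constant factor $|q|^{1/2}$. The pseudo-norm $\omega$ used in Theorem \ref{thm3} is defined precisely to satisfy $\omega(Ax)=|q|^{1/2}\omega(x)$, so in the similarity case one may simply take $\omega(x)=\|x\|$, making $\omega$ and the Euclidean norm equivalent. Equivalent norms induce the same Hausdorff dimension on every subset of $\RR^2$, so Theorem \ref{thm3} already gives
$$\dim_H(\partial T)=\dim_H^{\omega}(\partial T)=\frac{2\log \rho(M)}{\log |q|}.$$
It remains to identify $\rho(M)$ with the largest positive root of $g(x):=x^3-(|p|-1)x^2-(|q|-|p|)x-|q|$.

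\emph{Reading off $M$.} I would extract $M$ from the neighbor graph constructed before Theorem \ref{thm2}. By Theorem \ref{disk-likeness thm}, the hypothesis $2|p|\le|q+2|$ forces $T$ to be a topological hexagon (square when $p=0$) with at most six neighbors, grouped into three antipodal pairs which (after choosing signs according to the sign of $q$) can be taken to be $\{\pm v,\pm Av,\pm(v+Av)\}$. The central symmetry $\ell\mapsto -\ell$ is a graph automorphism, so the contact matrix splits under the action of $\ZZ/2\ZZ$ and its spectral radius is achieved on the symmetric part, which is a $3\times 3$ nonnegative integer matrix $M$ indexed by the three classes. The entries of $M$ count the number of digit pairs $(d,d')\in\mathcal D(v,|q|)^2$ with $A\ell'=\ell+d-d'$, and this count should turn out to be an elementary function of $|p|$ and $|q|$.

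\emph{Characteristic polynomial.} Direct expansion of $\det(xI-M)$ should then produce $g(x)$. Two consistency checks would guide and verify the computation. First, when $p=0$ the cubic factors as $g(x)=(x+1)(x^2-|q|)$, giving $\rho=|q|^{1/2}$ and $\dim_H(\partial T)=1$, consistent with $T$ being a square with piecewise-linear boundary. Second, the classical twin dragon $A=\begin{pmatrix}1&-1\\1&1\end{pmatrix}$ has $(p,q)=(-2,2)$, so $g(x)=x^3-x^2-2$ with largest root $\rho\approx 1.6956$, reproducing the well-known value $\dim_H(\partial T)\approx 1.5236$. Perron--Frobenius, together with the strong connectivity of the neighbor graph (which follows from $\partial T$ being a Jordan curve \cite{[AL2]}), then identifies $\rho(M)$ with the largest positive root of $g$, completing the proof.

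\emph{Expected main obstacle.} The reduction to $\rho(M)$ and the determinantal computation are each routine once their ingredients are set up; the real work lies in correctly reading off $M$, since the four sign regimes $(\operatorname{sgn} p,\operatorname{sgn} q)$ permitted by disk-likeness produce slightly different neighbor configurations which must all be shown to yield the same matrix in the variables $|p|,|q|$. The combinatorics of counting digit pairs $(d,d')$ along the CC digit set $\mathcal D(v,|q|)=\{0,1,\dots,|q|-1\}v$ is the most error-prone step, and cross-checking against the twin dragon value $1.5236$ is the most reliable guard against sign errors.
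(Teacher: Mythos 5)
Your overall architecture (reduce $\dim_H^{\omega}$ to $\dim_H$ via norm equivalence in the similarity case, then compute $\rho(M)$ from the contact matrix) matches the paper's, and your $\ZZ/2\ZZ$-symmetry reduction is actually cleaner at one point than what the paper does: the paper computes the full $6\times 6$ characteristic polynomial, e.g.\ $(x-1)(x^{2}+px+q)[x^{3}-(p-1)x^{2}-(q-p)x-q]$, and then must argue case by case why $\rho(M)$ cannot lie in the first two factors (it rules out $\rho(M)=1$ via $\dim_H=0$, rules out roots of $x^2\pm px+q$ via sign/non-reality, and in the borderline case $p^2=4q$ invokes simplicity of the Perron--Frobenius eigenvalue). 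Writing $M=\left(\begin{smallmatrix}B&C\\C&B\end{smallmatrix}\right)$ and noting $\rho(M)=\rho(B+C)$ with $\det(xI-B-C)$ equal to the desired cubic would bypass all of that, since $B+C$ is an irreducible nonnegative $3\times3$ matrix when $p\neq 0$.

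However, there is a genuine gap in the step you yourself flag as the real work. You plan to treat ``the four sign regimes $(\operatorname{sgn}p,\operatorname{sgn}q)$ permitted by disk-likeness'' and show they all yield the same matrix in $|p|,|q|$. This is false: for $f(x)=x^{2}\pm px-q$ with $p\geq 1$, $q\geq 2$ the contact matrix (the paper's Tables 9--10) is block-diagonal with $3\times3$ blocks whose characteristic polynomial is $x^{3}-(p+1)x^{2}-(q-p)x+\bigl(2pq-2p^{2}-2p+q\bigr)$, which is not the claimed cubic in $|p|,|q|$. The missing ingredient is the paper's Lemma 3.6: an expanding integral \emph{similarity} is a scaled rotation or scaled reflection, so its characteristic polynomial has positive constant term unless $p=0$; hence the regimes $q<0$, $p\neq 0$ are vacuous and must be discarded, not verified. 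Without this observation your computation would hit a contradiction in those regimes. Two smaller inaccuracies: the neighbor set is not $\{\pm v,\pm Av,\pm(v+Av)\}$ in general but $\{\pm v,\pm(Av+(|p|-1)v),\pm(Av+|p|v)\}$ (up to signs), and depends on $p$; and for $p=0$ the tile is a parallelogram with eight graph vertices and a \emph{reducible} contact matrix, so strong connectivity does not ``follow from $\partial T$ being a Jordan curve'' there --- the paper handles $f(x)=x^{2}\pm q$ separately and checks that $\rho(M)=|q|^{1/2}$ happens to equal the largest root of $(x+1)(x^{2}-|q|)$, exactly your consistency check, but this needs to be part of the proof rather than a sanity test.
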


The rest of the paper is organized as follows: In Section 2, we identify  $\partial T$ with a  sofic system by constructing a neighbor graph and prove Theorem \ref{thm2}. In Section 3, we consider $\partial T$ as a graph-directed set and prove Theorems \ref{thm3} and \ref{mainthm}. Finally all neighbor graphs,
graph-directed sets and contact matrices corresponding to different characteristic polynomials $f(x)$ are  listed in  Appendices A, B and C for easy reference.
\end{section}

\bigskip

\begin{section}{\bf Sofic System and number system}

We first introduce some terminology of symbolic dynamics from
\cite{[LM]}. Let $\mathcal{G}=\mathcal{G(V,E)}$ be a directed graph
where $\mathcal{V}$ is the set of vertices and $\mathcal{E}$ the set
of edges.  Let $\mathcal{A}$ be a finite set (called {\it
alphabet}). If there exists a mapping (called {\it labeling})
$\mathcal{L}:\mathcal{E}\rightarrow\mathcal{A}$, then the ordered
pair $\mathbf{G}=(\mathcal{G},\mathcal{L})$ is called a {\it labeled
directed graph}. All the infinite paths $\xi=e_{1}e_{2}e_{3}\ldots$
on $\mathcal{G}$ constitute the so-called {\it edge shift}
$\mathbf{X}_{\mathcal{G}}$.  Define the {\it label of the path
}$\xi$ by
\begin{equation*}
\mathcal{L}_{\infty}(\xi):=
\mathcal{L}(e_{1})\mathcal{L}(e_{2})\mathcal{L}(e_{3})\ldots\in\mathcal{A}^{\NN}.
\end{equation*}
Here $\mathcal{A}^{\Bbb N}$ is called the {\it full shift} of
$\mathcal{A}$.  The set of all such labels is denoted by
$$
\mathbf{X}_{\mathbf{G}}=\set{x\in\mathcal{A}^{\NN}}{x=\mathcal{L}_{\infty}(\xi)\;\text{for
some}\;\xi\in\mathbf{X}_{\mathcal{G}}}.
$$
Any subset of $\mathcal{A}^{\NN}$ which can be defined by a labeled
directed graph as above, is called a {\it sofic shift} or {\it sofic
system} (\cite{[Fi]}, \cite{[LM]}).  Weiss \cite{[We]} coined the
term {\it sofic} which is derived from the Hebrew word for {\it
finite} \cite{[LM]}.

Let $D=\{0,1,\dots, |q|-1\}$ and the difference set  $\Delta D :=D-D$,  then the CC digit set ${\mathcal D}= Dv$ and $\Delta{\mathcal
D} := {\mathcal D}- {\mathcal D}= \Delta D v$. Without loss of generality, we assume the digit set ${\mathcal D}$ is primitive, i.e., the lattice ${\mathcal J}$ generated by ${\mathcal D}$ and $A{\mathcal D}$ in ${\mathbb Z}^2$ is equal to ${\mathbb Z}^2$. For otherwise, there exists an invertible $B\in M_2({\mathbb Z})$ such that $\tilde{\mathcal D}=B^{-1}{\mathcal D}\subset {\mathbb Z}^2$ is primitive and $T(A,{\mathcal D})=BT(\tilde{A}, \tilde{\mathcal D})$ where $\tilde{A}=B^{-1}AB\in M_2({\mathbb Z})$ \cite{[LW2]} and we can consider $\tilde{A}, \tilde{\mathcal D}$ instead. Hence we set ${\Bbb Z}^2 =\{\gamma v + \delta Av: \gamma, \delta \in {\Bbb Z}\}$. It is easy to see that $T+\ell$ where $\ell \in {\mathbb Z}^2$ is a neighbor of $T$ if and only if $\ell\in T-T$.  More precisely, $\ell$ can be expressed as
$$\ell=  \sum_{i=1}^{\infty}b_i A^{-i}v \in T-T, \quad  b_i\in \Delta D.$$

The following is a neighbor-generating formula which plays a key role in constructing the labeled directed graph for the boundary.

\begin{Lem}(\cite{[LL]}) \label{neighbor generator}
Suppose $T+\ell$ is a neighbor of $T$ with $\ell=\gamma v+\delta Av=\sum_{i=1}^{\infty}b_iA^{-i}v$, then we get  another neighbor
$T+\ell'$  satisfying $\ell'=A\ell-b_1v=\gamma' v+\delta' Av$ with $\gamma'=-(q\delta+b_1)$ and $\delta'=\gamma-p\delta$.

Inductively, we can construct a sequence of neighbors: $\{T+\ell_n\}_{n=0}^{\infty}$ where $\ell_0=\ell$ and $\ell_{n+1}= A\ell_n-b_{n+1}v$.
\end{Lem}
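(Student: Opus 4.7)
The plan is to argue directly from the series expansion $\ell=\sum_{i=1}^{\infty}b_iA^{-i}v$, use the shift action of $A$, and then use the characteristic polynomial to rewrite the result in the basis $\{v,Av\}$.

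First I would verify that $T+\ell'$ is indeed a neighbor. Since $\ell\in T-T$ has the expansion $\ell=\sum_{i=1}^{\infty}b_iA^{-i}v$ with $b_i\in\Delta D$, multiplying by $A$ and subtracting the leading term gives
\begin{equation*}
\ell'=A\ell-b_1v=\sum_{i=1}^{\infty}b_{i+1}A^{-i}v,
\end{equation*}
which is again a legitimate series with digits in $\Delta D$, hence $\ell'\in T-T$. By the characterization mentioned just before the lemma ($\ell\in\mathbb{Z}^2$ is a neighbor iff $\ell\in T-T$), and noting that $\ell'$ still lies in $\mathbb{Z}^2$ because $A\ell\in\mathbb{Z}^2$ and $b_1v\in\mathbb{Z}^2$, it follows that $T+\ell'$ is a neighbor of $T$.

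Next I would compute the coordinates of $\ell'$ in the basis $\{v,Av\}$ of $\mathbb{Z}^2$. Writing $\ell=\gamma v+\delta Av$, we have $A\ell=\gamma Av+\delta A^2v$. Since $f(x)=x^2+px+q$ is the characteristic polynomial of $A$, the Cayley--Hamilton identity $A^2=-pA-qI$ yields
\begin{equation*}
A^2v=-pAv-qv,
\end{equation*}
and so
\begin{equation*}
A\ell=-q\delta\, v+(\gamma-p\delta)\, Av.
\end{equation*}
Subtracting $b_1v$ gives $\ell'=-(q\delta+b_1)v+(\gamma-p\delta)Av$, which matches the asserted formulas $\gamma'=-(q\delta+b_1)$ and $\delta'=\gamma-p\delta$.

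Finally, the inductive construction is immediate: having produced $\ell_n=A\ell_{n-1}-b_nv\in T-T$ with tail expansion $\sum_{i=1}^{\infty}b_{n+i}A^{-i}v$, the same argument applied to $\ell_n$ (peeling off the leading digit $b_{n+1}$) yields $\ell_{n+1}=A\ell_n-b_{n+1}v\in T-T$, so the recursion runs forever and produces the desired sequence of neighbors. There is no real obstacle here; the only subtle point is making sure that the ``shifted'' series is again a valid element of $T-T$ (so that $\ell'$ truly corresponds to a neighbor), which is exactly what the $\Delta D$-expansion characterization provides.
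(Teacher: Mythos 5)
Your proof is correct: the shift computation $A\ell-b_1v=\sum_{i\ge 1}b_{i+1}A^{-i}v$ shows $\ell'\in T-T$ (hence a neighbor, by the characterization $\ell\in\mathbb{Z}^2$ is a neighbor iff $\ell\in T-T$), and the Cayley--Hamilton substitution $A^2v=-pAv-qv$ gives exactly $\gamma'=-(q\delta+b_1)$, $\delta'=\gamma-p\delta$. The paper itself gives no proof of this lemma (it is quoted from \cite{[LL]}), but your argument is precisely the standard one intended there, so there is nothing substantive to compare.
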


 Let $T$ be a disk-like CC tile and $T_\ell=T\cap(T+\ell)$ for any $\ell\in {\mathbb Z}^2$. Let $\mathcal{V}=\{\ell\in {\mathbb Z}^2: \ell\ne 0 \ \text{and} \ T\cap T_\ell\ne\emptyset\}$. Then the boundary of $T$ can be written as
\begin{equation}\label{eq:bdryT}
\partial T =\bigcup_{\ell\in\mathcal{V}}T_{\ell}.
\end{equation}

Define an edge set  ${\mathcal E}:=\{e=(\ell,\ell'):\  \ell,\
\ell'\in { \mathcal V}\ \ \text{and}\ \ \ell'=A\ell-b_{1}v
 \ \text{for some} \
b_{1} \in \Delta D\}$ and a labeling
$\mathcal{L}:\mathcal{E}\rightarrow\mathcal{A}$ by
$\mathcal{L}(e)=b_{1}$ where ${\mathcal A}=\Delta D$. Then by the
definition above,  $\mathbf{G}=(\mathcal{G},\mathcal{L})$ is a
labeled directed graph and it determines a sofic shift. We call
$\mathbf{G}$ the \textit{neighbor graph} of $T$.

\begin{Prop}\label{prop:labdigraph}
Let $\mathbf{G}$ be the neighbor graph of a CC disk-like tile $T$.
If $x=\sum_{i=1}^{\infty}a_{i}A^{-i}v=\ell
+\sum_{i=1}^{\infty}a'_{i}A^{-i}v\in T_{\ell}$ where $a_{i},
a'_{i}\in D$, then $\{b_{i} :=a_{i}-a'_{i}\}_{i=1}^{\infty}$ is the
sequence of labeling of the edges of an infinite path starting at
$\ell$ (or simply called a label sequence starting at $\ell$).
Conversely, any label sequence $\{b_{i}\}_{i=1}^{\infty}$ (with
$b_{i}\in \Delta D$) starting at $\ell$ defines a set
\begin{equation*}
\big\{x:x=\sum_{i=1}^{\infty}a_{i}A^{-i}v=\ell+\sum_{i=1}^{\infty}a'_{i}A^{-i}v,
\  a_{i}-a'_{i}=b_{i}, \  a_{i}, a'_{i}\in D \ \text{for} \
i=1,2,\dots\big\}
\end{equation*}
of boundary points of $T$.
\end{Prop}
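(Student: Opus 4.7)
The key tool throughout is Lemma \ref{neighbor generator}, combined with the observation that $\mathcal{V}$ is finite (since $T$ is compact, $T-T$ is bounded and therefore meets $\ZZ^{2}$ in only finitely many points).

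For the forward direction, I would set $\ell_{0}:=\ell$ and $\ell_{n+1}:=A\ell_{n}-b_{n+1}v$ for every $n\geq 0$, where $b_{n+1}:=a_{n+1}-a'_{n+1}$. To see that $(\ell_{n},\ell_{n+1})$ is actually an edge of $\mathbf{G}$ with label $b_{n+1}$, it suffices to verify that each $\ell_{n}$ lies in $\mathcal{V}$. I would do this by exhibiting explicit witnesses, namely the tails
\begin{equation*}
y_{n}:=\sum_{i=1}^{\infty}a_{n+i}A^{-i}v,\qquad z_{n}:=\sum_{i=1}^{\infty}a'_{n+i}A^{-i}v,
\end{equation*}
both of which lie in $T$. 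A short induction using the recurrences $y_{n+1}=Ay_{n}-a_{n+1}v$, $z_{n+1}=Az_{n}-a'_{n+1}v$ together with the definition of $\ell_{n+1}$ gives $y_{n}-z_{n}=\ell_{n}$, so $y_{n}\in T\cap(T+\ell_{n})=T_{\ell_{n}}$, confirming $\ell_{n}\in\mathcal{V}$.

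For the converse, let $\{b_{i}\}_{i=1}^{\infty}$ be a label sequence starting at $\ell_{0}=\ell$, and let $\{\ell_{n}\}$ be the associated vertex sequence in $\mathcal{V}$. Since $\mathcal{V}$ is finite, $\{\ell_{n}\}$ is bounded. Iterating the identity $\ell_{n}=A^{-1}(\ell_{n+1}+b_{n+1}v)$ gives
\begin{equation*}
\ell=\sum_{i=1}^{N}b_{i}A^{-i}v+A^{-N}\ell_{N},
\end{equation*}
and since $A$ is expanding and $\ell_{N}$ stays bounded, letting $N\to\infty$ yields the crucial series representation $\ell=\sum_{i=1}^{\infty}b_{i}A^{-i}v$. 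Now for any choice of $d_{i},d'_{i}\in D$ satisfying $d_{i}-d'_{i}=b_{i}$ (which exists because $b_{i}\in\Delta D$), the point $x:=\sum_{i=1}^{\infty}d_{i}A^{-i}v$ lies in $T$, and $x-\ell=\sum_{i=1}^{\infty}d'_{i}A^{-i}v$ also lies in $T$, so $x\in T_{\ell}\subset\partial T$ by (\ref{eq:bdryT}) (since $\ell\ne 0$).

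The main obstacle is ensuring that the iterates $\ell_{n}$ produced by Lemma \ref{neighbor generator} genuinely remain in $\mathcal{V}$ rather than just in $\ZZ^{2}$; this is what forces the witness construction in the forward direction. In the converse, the critical step is the convergent series representation $\ell=\sum b_{i}A^{-i}v$, whose legitimacy rests on the finiteness of $\mathcal{V}$. Once both of these ingredients are in place, the two statements of the proposition follow with essentially no additional computation.
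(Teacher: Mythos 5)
Your proof is correct and follows essentially the same route as the paper's: the forward direction is the neighbor-generating recurrence of Lemma \ref{neighbor generator} (which you re-derive rather than cite, via the tail witnesses $y_{n},z_{n}$ showing $\ell_{n}=y_{n}-z_{n}\in\mathcal{V}$), and the converse splits each $b_{i}$ as $a_{i}-a_{i}'$ with $a_{i},a_{i}'\in D$ exactly as the paper does. The only substantive addition is your explicit derivation of $\ell=\sum_{i=1}^{\infty}b_{i}A^{-i}v$ from the finiteness of $\mathcal{V}$ and the expansiveness of $A$ --- a step the paper's converse simply takes as its starting hypothesis --- which is a worthwhile piece of bookkeeping but not a different argument.
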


\begin{proof}
Since $\ell=\sum_{i=1}^{\infty}b_{i}A^{-i}v$ with $b_i=a_i-a_i'$, by
Lemma \ref{neighbor generator}, we have a sequence of neighbors
$\{T+\ell_n\}_{n=0}^{\infty}$ where $\ell_{0}=\ell$ and
$\ell_{n+1}=A\ell_n-b_{n+1}v$, hence $\{b_i\}_{i=1}^{\infty}$ is a
label sequence starting at $\ell$ by the definition.

Conversely, if $\ell=\sum_{i=1}^{\infty}b_{i}A^{-i}v$ where
$b_{i}\in\Delta D$, then $b_i=a_{i}-a_i'$ for $a_{i}, a'_{i}\in D$
and $\ell=\sum_{i=1}^{\infty}(a_{i}-a'_{i})A^{-i}v$.   It follows
that
\begin{equation}\label{eq:bdry-pt}
x=\sum_{i=1}^{\infty}a_{i}A^{-i}v=\ell+\sum_{i=1}^{\infty}a'_{i}A^{-i}v\in
T\cap(T+\ell)=T_{\ell}.
\end{equation}
\end{proof}

We can verify whether the origin $0$ is a boundary point of $T$  in the following way.

\begin{Cor}\label{cor:origin}
 $0\in \partial T$ if and only if there
exists an infinite path in $\mathbf{G}$ with all edge labels either
non-positive or  non-negative.
\end{Cor}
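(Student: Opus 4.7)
The plan is to translate the statement directly via Proposition~\ref{prop:labdigraph}, applied to the special case $x = 0$.

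For the forward direction, I would assume $0 \in \partial T$. By \eqref{eq:bdryT}, there exists $\ell \in \mathcal{V}$ with $0 \in T_{\ell}$, which in particular forces $-\ell \in T$. Choose any expansion $-\ell = \sum_{i=1}^{\infty} a'_i A^{-i} v$ with $a'_i \in D$, and pair it with the trivial expansion $0 = \sum_{i=1}^{\infty} 0 \cdot A^{-i} v \in T$. By Proposition~\ref{prop:labdigraph}, the sequence $b_i := 0 - a'_i = -a'_i$ is the label sequence of an infinite path starting at $\ell$, and since $a'_i \in D \subset \{0,1,\dots,|q|-1\}$, all $b_i$ are non-positive.

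For the converse, suppose $\{b_i\}$ labels an infinite path in $\mathbf{G}$ starting at $\ell \in \mathcal{V}$ with all $b_i \le 0$. Iterating Lemma~\ref{neighbor generator} along this path gives $\ell_n = A^n \ell - \sum_{i=1}^n A^{n-i} b_i v$, whence
\[
A^{-n} \ell_n = \ell - \sum_{i=1}^n b_i A^{-i} v.
\]
Because $\mathcal{V}$ is finite and $A$ is expanding, the left-hand side tends to $0$, so $\ell = \sum_{i=1}^{\infty} b_i A^{-i} v$. Setting $c_i := -b_i \in D$ then gives $-\ell = \sum_{i=1}^{\infty} c_i A^{-i} v \in T$, hence $0 \in T \cap (T+\ell) = T_{\ell} \subset \partial T$. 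The case of a non-negative label path reduces to the previous one via the involution $\ell \mapsto -\ell$, $b_i \mapsto -b_i$: one checks $\mathcal{V} = -\mathcal{V}$ since $T_\ell \ne \emptyset \Leftrightarrow T_{-\ell} \ne \emptyset$, and the defining edge relation $\ell' = A\ell - b_1 v$ is preserved under this symmetry, so the reverse path is a non-positive label path starting at $-\ell \in \mathcal{V}$.

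The only nontrivial step is the identification $\ell = \sum_{i \ge 1} b_i A^{-i} v$ via $A^{-n} \ell_n \to 0$; this rests on the finiteness of $\mathcal{V}$ (standard for disk-like tiles, which have at most six neighbors) combined with the contracting action of $A^{-n}$ on bounded sets. Apart from this, the corollary is a direct specialization of Proposition~\ref{prop:labdigraph}, and I anticipate no real obstacle.
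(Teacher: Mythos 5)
Your proof is correct and follows essentially the same route as the paper: the forward direction plugs the trivial expansion $a_i=0$ into Proposition~\ref{prop:labdigraph} to get a non-positive label sequence at $\ell$ (and a non-negative one at $-\ell$), and the converse reverses this. The paper dispatches the converse with ``by reversing the argument''; your explicit verification that the labels of an infinite path starting at $\ell$ must sum to $\ell$ (via $A^{-n}\ell_n\to 0$, using finiteness of $\mathcal{V}$) is a legitimate filling-in of that step rather than a different method.
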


\begin{proof}
Suppose $0\in T\cap(T+\ell)$ for some neighbor $T+\ell$.  Putting
$a_{i}=0$ for all $i$ into (\ref{eq:bdry-pt}), we have
$$
\ell=\sum_{i=1}^{\infty}(-a'_{i})A^{-i}v.
$$
Since $a_i'\in D$, the label sequence
$\{b_{i}=-a'_{i}\}_{i=1}^{\infty}$ starting at $\ell$ has all labels
non-positive.  Similarly $\{b'_{i}=a_{i}\}_{i=1}^{\infty}$
 is a sequence starting at $-\ell$ with all labels non-negative. By reversing the argument, we can prove the converse.
\end{proof}

In fact,  we can determine the neighbor graph $\mathbf{G}$ for any
disk-like CC tile $T$. Let us take the case of
 $f(x)=x^{2}+px+q,\;p,q\geq 2$ (excluding $p=q=2$) as an example. By
Theorem \ref{disk-likeness thm},  $T$ is a hexagonal tile with six
neighbors \cite{[LL]} and
\begin{equation}\label{six neighbors}
\mathcal{V}=\{\pm v,\;\pm \big(Av+(p-1)v\big),\;\pm(Av+pv)\}.
\end{equation}
In view of the definition of $\mathcal{E}$, if $\ell=v$ we take
$b_{1}=-p$ and $\ell'=Av+pv$ or $b_{1}=-(p-1)$ and
$\ell'=Av+(p-1)v$;  if $\ell=Av+pv$,  using $f(A)v=0$, we have
$b_{1}=-(q-1)$ and $\ell'=-v$. Proceeding similarly with all $\ell$,
we obtain Table \ref{nbtb:x^2+px+q}. Then we establish the neighbor
graph (Figure \ref{fig:0g}). The neighbor graphs corresponding to
other $f(x)$ are given in Appendix A.

\begin{table}[h]
\centering
\begin{tabular}{|c|c|c|}
\hline $\ell$ & $b_{1}$ & $\ell'$\\
\hline
$v$ & $-(p-1)$ & $Av+(p-1)v$\\
& $-p$ & $Av+pv$\\
\hline
$Av+(p-1)v$ & $-(q-p)$ & $-Av-pv$\\
& $-(q-p+1)$ & $-Av-(p-1)v$\\
\hline
$Av+pv$ & $-(q-1)$ & $-v$\\
\hline
$-v$ & $p-1$ & $-Av-(p+1)v$\\
& $p$ & $-Av-pv$\\
\hline
$-Av-(p-1)v$ & $q-p$ & $Av+pv$\\
& $q-p+1$ & $Av+(p-1)v$\\
\hline
$-Av-pv$   & $q-1$ & $v$\\
\hline
\end{tabular}
\vspace{0.2cm} \caption{Relation among all neighbors of $T$
associated with $f(x)=x^{2}+px+q,\;p,q\geq 2,\;2p\leq
q+2$\;(excluding\;$p=q=2$).} \label{nbtb:x^2+px+q}
\end{table}

\begin{figure}[h]
  \[
  \begin{xy}
    0;<1cm,0cm>: (0,0)*=<2cm,1cm>{v}*\frm{-}="a"
    ,(0,-3)*=<3cm,1cm>{Av+(p-1)v}*\frm{-}="b"
    ,(0,-6)*=<2cm,1cm>{Av+pv}*\frm{-}="c"
    ,(6,0)*=<2cm,1cm>{-v}*\frm{-}="d"
    ,(6,-3)*=<3cm,1cm>{-Av-(p-1)v}*\frm{-}="e"
    ,(6,-6)*=<2cm,1cm>{-Av-pv}*\frm{-}="f"
    ,"a"+(0,-0.5)
    ;"b"+(0,0.5)**\dir{-}?*^!/25pt/{-(p-1)}?>*\dir{>}
    ,"b"+(1.5,-0.5)
    ;"f"+(-1,0.5)**\dir{-}?(0.1)*^!/20pt/{-(q-p)}?>*\dir{>}
    ,"e"+(-1.5,-0.5)
    ;"c"+(1,0.5)**\dir{-}?(0.1)*_!/20pt/{q-p}?>*\dir{>}
    ,"b"+(1.5,0.3)
    ;"e"+(-1.5,0.3)**\dir{-}?*_!/10pt/{-(q-p+1)}?>*\dir{>}
    ,"e"+(-1.5,-0.3)
    ;"b"+(1.5,-0.3)**\dir{-}?*_!/10pt/{q-p+1}?>*\dir{>}
    ,"a"+(-1,0)
    ;"c"+(-1,0)**\crv{(-4,-3)}?*^!/10pt/{-p}?>*\dir{>}
    ,"d"+(0,-0.5)
    ;"e"+(0,0.5)**\dir{-}?*_!/25pt/{p-1}?>*\dir{>}
    ,"d"+(1,0)
    ;"f"+(1,0)**\crv{(10,-3)}?*_!/10pt/{p}?>*\dir{>}
    ,"c"+(1,-0.5)
    ;"d"+(1,0.3)**\crv{"f"+(0,-2)&"f"+(4,0)&"d"+(4,0)}?(0.3)*^!/15pt/{-(q-1)}?>*\dir{>}
    ,"f"+(-1,-0.5)
    ;"a"+(-1,0.3)**\crv{"c"+(0,-2)&"c"+(-4,0)&"a"+(-4,0)}?(0.3)*_!/15pt/{q-1}?>*\dir{>}
  \end{xy}
  \]
  \vspace{0.2cm}
  \caption{The neighbor graph of $T$ associated with $f(x)=x^2+px+q,\;p,q\geq 2,\;2p\leq q+2$, (excluding $q=p=2$).}
  \label{fig:0g}
\end{figure}

\medskip

Following \cite{[LW1]}, we let ${\mathcal
D}_{A,k}=\{\sum_{i=0}^{k-1}a_iA^i v: a_i\in D\}$, $\Delta{\mathcal
D}_{A,k}={\mathcal D}_{A,k}-{\mathcal
D}_{A,k}=\{\sum_{i=0}^{k-1}b_iA^i v: b_i\in \Delta D\}$ and
${\mathcal D}_{A,\infty}=\cup_{k=1}^{\infty}{\mathcal D}_{A,k}$.

\begin{Prop}\label{prop:nbcoeff}
Let $T$ be a disk-like CC tile and $T+\ell$ a neighbor. Then
$\ell=\sum_{i=0}^k b_{i}A^{i}v\in\Delta{\mathcal D}_{A, k+1}$ for
some $k\in \ZZ$ with $b_k\in \{-1,1\}$, $b_{i}\in\Delta D$ for
$0\leq i<k$.  When $f(x)=x^{2}\pm 2x +2$, $k=3$; and $k=1$
otherwise.
\end{Prop}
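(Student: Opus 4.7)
The plan is to proceed by an explicit case analysis on the characteristic polynomial $f(x)=x^{2}+px+q$. By Theorem~\ref{disk-likeness thm} the disk-likeness assumption forces $2|p|\leq|q+2|$, which subdivides the problem into finitely many sub-families indexed by the signs of $p$ and $q$ together with the boundary configurations. In each such sub-family the full set $\mathcal{V}$ of nonzero neighbors has been explicitly computed in \cite{[LL]} and is recorded in neighbor tables of the type of Table~\ref{nbtb:x^2+px+q}, so every nonzero $\ell\in\mathcal{V}$ has the shape either $\ell=\pm v$ or $\ell=\pm(Av+cv)$ for a small integer $c$ determined by $(p,q)$. The invariance of the problem under $(p,q)\mapsto(-p,q)$ (replacing $v$ by $-v$) further halves the number of cases.

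I would first dispatch the generic families, i.e.\ those with $f(x)\neq x^{2}\pm 2x+2$. There a short inspection of the tables, combined with the inequality $2|p|\leq|q+2|$, yields $|c|\leq|q|-1$ for every neighbor, hence $c\in\Delta D$. Setting $k=0$, $b_{0}=\pm 1$ handles $\ell=\pm v$; and setting $k=1$, $b_{1}=\pm 1$, $b_{0}=\pm c$ handles $\ell=\pm(Av+cv)$. The crucial input here is that the equality $2|p|=|q+2|$ together with $|q|=2$ is exactly what singles out the excluded boundary cases, so in the generic families the bound $|c|\leq|q|-1$ is strict enough to place $c$ in $\Delta D$.

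For the two boundary families $f(x)=x^{2}\pm 2x+2$ one has $|q|=2$, $\Delta D=\{-1,0,1\}$ and $|p|=2$; the critical neighbor $\ell=\pm(Av+2v)$ violates the $k=1$ requirement because $2\notin\Delta D$. Here I would invoke the Cayley--Hamilton relation $A^{2}v=-pAv-qv$ to re-expand. For $f(x)=x^{2}+2x+2$ a direct computation with $A^{2}v=-2Av-2v$ and $A^{3}v=2Av+4v$ yields
$$
Av+2v \;=\; A^{3}v+A^{2}v+Av,
$$
producing $k=3$ with $b_{3}=b_{2}=b_{1}=1$ and $b_{0}=0$; the case $f(x)=x^{2}-2x+2$ is handled by the symmetric relation, and the negatives of the critical neighbors by flipping signs throughout.

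The principal obstacle will be the bookkeeping of the case analysis itself: one must sweep through every sign-sector of $(p,q)$ allowed by $2|p|\leq|q+2|$, align each entry of the neighbor tables with an admissible expansion, and in the two boundary families verify that the Cayley--Hamilton iteration terminates with a leading coefficient $\pm 1$ at exactly the advertised $k=3$. Once these computations are assembled, the dichotomy $k=3$ versus $k=1$ emerges as the maximum expansion depth in each family.
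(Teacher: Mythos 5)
Your proposal is correct and follows essentially the same route as the paper: the paper's proof simply reads off from the explicit neighbor sets (equation (\ref{six neighbors}) and its analogues in Appendix A) that $\ell\in\Delta\mathcal{D}_{A,2}$ except when $f(x)=x^{2}\pm 2x+2$, and then re-expands $Av\pm 2v=A^{3}v\pm A^{2}v+Av$ using $(A\mp I)f(A)=0$, which is the same identity you derive via Cayley--Hamilton. Your additional check that $2|p|\leq|q+2|$ forces the coefficients into $\Delta D$ in the generic families is a worthwhile explicit verification of what the paper leaves implicit.
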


\begin{proof}
It follows from (\ref{six neighbors}) that $\ell\in\Delta {\mathcal
D}_{A,2}$ excluding the case of  $f(x)=x^{2}\pm 2x+2$. For
$f(x)=x^{2}\pm 2x+2$, we have $Av \pm 2v =A^{3}v \pm
A^{2}v+Av\in\Delta{\mathcal D}_{A,4}$ by using $(A \mp I)f(A)=0$.
\end{proof}

A more desirable property is that  any $\ell\in {\Bbb Z}^2$ can be
expressed as $\ell=\sum_{i=0}^{k}a_{i}A^{i}v\in {\mathcal
D}_{A,k+1}$ (instead of $\Delta {\mathcal D}_{A,k+1}$)  for some
$k\in\ZZ$ with $a_k=1$ and $a_{i}\in D$ where $ 0\leq i<k$. But this
is not always the case. This property is closely related to  a {\it
number system} defined below (see also \cite{[MTT]}).

\begin{Def}\label{def:num-sys}
Let $A\in M_{2}(\Bbb Z)$ be expanding and  ${\mathcal D}$ be a CC digit set. The self-affine pair $(A,{\mathcal D})$ is
said to be a number system if for any $\ell\in \ZZ^{2}$, it has a
unique representation $\ell=\sum_{i=0}^{k}A^{i}v_{i}'$ with
$v_{i}'\in {\mathcal D}$.
\end{Def}

For convenience, we sometimes  write a point of the form
$x=\sum_{i=1}^{\infty}a_iA^{-i}v \in {\mathbb R}^2$ as {\it radix
expansion}: $0.a_1a_2a_3\ldots$  An overbar denotes repeating digits
as in $0.12\overline{301}=0.12301301301\ldots$ Likewise,
$a_{-2}a_{-1}a_0.a_1a_2a_3\ldots$ represents a point
$a_{-2}A^2v+a_{-1}Av+a_0v+\sum_{i=1}^{\infty}a_iA^{-i}v.$  Note that
shifting a radix place to the left means multiplying $A$ to $x$.
When $x$ is on the boundary of $T$, the radix expansion of $x$ is
not  unique.  Now we give some equivalent conditions for the
self-affine pair $(A,{\mathcal D})$ to be a number system.

\begin{theorem}\label{thm:numbsys}
Let $T=T(A,{\mathcal D})$ be a disk-like CC tile. Then the following
are equivalent:

 (i) $(A,{\mathcal D})$ is a number system.

(ii) $0\in T^{\circ}$.

(iii) $f(x)=x^{2}+px+q$ with $-1\leq p$ and $q\geq 2$.

(iv) For all neighbors $T+\ell$, \
$\ell=\sum_{i=0}^{k}a_{i}A^{i}v\in {\mathcal D}_{A,k+1}$ for some
$k\in\ZZ$ with $a_k=1$ and $a_{i}\in D$ where $0\leq i<k$.
\end{theorem}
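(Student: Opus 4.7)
The plan is to establish the four equivalences as a cyclic chain (iii) $\Rightarrow$ (ii) $\Rightarrow$ (i) $\Rightarrow$ (iv) $\Rightarrow$ (iii). The main ingredients are Corollary \ref{cor:origin}, Proposition \ref{prop:nbcoeff}, and the explicit neighbor graphs compiled in Appendix A.

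For (iii) $\Rightarrow$ (ii), I would argue by the contrapositive of Corollary \ref{cor:origin}: under $-1\le p$ and $q\ge 2$, combined with the standing disk-likeness $2|p|\le q+2$, the neighbor graph $\mathbf{G}$ carries no infinite path whose labels are all of one sign. In the representative case $p,q\ge 2$ displayed in Figure \ref{fig:0g}, the positively labeled edges originate in $\{-v,\ -Av-(p-1)v,\ -Av-pv\}$ and terminate in $\{v,\ Av+(p-1)v,\ Av+pv\}$, whose outgoing edges are all negatively labeled, so no positively labeled cycle is possible; the negatively labeled case is symmetric, and the remaining admissible profiles $p\in\{-1,0,1\}$ with $q\ge 2$ are handled by inspecting the analogous tables in Appendix A. For (ii) $\Rightarrow$ (i), I would use that $T$ is a $\ZZ^2$-tile, so $0$ is the only lattice point in $T^\circ$. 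For any $\ell\in\ZZ^2$, expansiveness of $A$ gives $A^{-k}\ell\to 0$, hence $A^{-k}\ell\in T^\circ$ for large $k$; then from $A^kT^\circ=T^\circ+{\mathcal D}_{A,k}$ I would decompose $\ell=d+t$ with $d\in{\mathcal D}_{A,k}$ and $t=\ell-d\in\ZZ^2\cap T^\circ=\{0\}$, whence $\ell\in{\mathcal D}_{A,k}$, and uniqueness follows because ${\mathcal D}$ is a complete residue system modulo $A\ZZ^2$.

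For (i) $\Rightarrow$ (iv), Proposition \ref{prop:nbcoeff} already places each neighbor in $\Delta{\mathcal D}_{A,k+1}$; (i) upgrades this to a finite $D$-expansion, and repeated use of $A^2v=-pAv-qv$ pins the leading coefficient to $1$. I would check this directly on the finite list $\mathcal{V}$ in each case of Appendix A; for example $-v=A^2v+pAv+(q-1)v$ when $0\le p\le q-1$, while $-v=A^3v+(q-1)Av+(q-1)v$ when $p=-1$. For (iv) $\Rightarrow$ (iii) I argue contrapositively: if $q\le -2$, or $q\ge 2$ with $p\le -2$, I would isolate a specific neighbor (typically $-v$ or one of $\pm(Av+\alpha v)$ with extremal $\alpha$) and show, by substituting $A^2v=-pAv-qv$ and inducting on $k$, that any candidate expansion $\ell=\sum_{i=0}^k a_iA^iv$ with $a_k=1$ forces some intermediate $a_i$ outside $D=\{0,\dots,|q|-1\}$.

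The main obstacle I anticipate is keeping the case analysis uniform. Appendix A contains several distinct neighbor graphs distinguished by the signs of $p,q$ and by boundary values such as $p=q$ or $f(x)=x^2\pm 2x+2$, and both (iii) $\Rightarrow$ (ii) and (iv) $\Rightarrow$ (iii) require an individual verification in each. My strategy is to exploit the involution $\ell\mapsto-\ell$ on $\mathcal{V}$ to halve the work, and in each case to reduce the obstruction to the single identity $A^2v+pAv+qv=0$ rather than redrawing each labeled graph by hand.
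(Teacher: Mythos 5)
Your toolkit is the same as the paper's (Corollary~\ref{cor:origin}, Proposition~\ref{prop:nbcoeff}, case inspection of Appendix~A, and the identity $f(A)v=0$), and two of your legs essentially coincide with the paper's: your (iii)$\Rightarrow$(ii) is one direction of the paper's (ii)$\Leftrightarrow$(iii), and your (ii)$\Rightarrow$(i) is the paper's argument verbatim (with uniqueness made explicit). But your cyclic ordering creates a genuine gap in the two remaining legs. For (i)$\Rightarrow$(iv): hypothesis (i) gives each neighbor \emph{some} finite ${\mathcal D}$-expansion, but nothing in (i) forces the leading digit to be $1$ rather than any element of $\{1,\dots,|q|-1\}$; your fallback is to ``check directly on the finite list in each case of Appendix~A,'' yet at that point in your cycle you only know (iii)$\Rightarrow$(i), not (i)$\Rightarrow$(iii), so you cannot restrict the case list to the polynomials where the check succeeds --- in the excluded cases (e.g.\ $f(x)=x^2-q$) the verification simply fails for some neighbor. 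The paper avoids this by proving (iii)$\Rightarrow$(iv) instead, so the admissible $f$'s are known from the hypothesis and the expansions can be written down explicitly.

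The second soft spot is (iv)$\Rightarrow$(iii) by contraposition: you must prove \emph{non-existence} of any expansion $\ell=\sum_{i=0}^{k}a_iA^iv$ with $a_k=1$, $a_i\in D$, over all $k$, for a chosen neighbor in each excluded case. Your ``substitute $A^2v=-pAv-qv$ and induct on $k$'' is plausible for $f(x)=x^2\pm q$ but is an open-ended computation in the cases $x^2-px+q$ and $x^2\pm px-q$, and you do not indicate the induction invariant. The paper's route is strictly easier: (iv)$\Rightarrow$(ii) reduces, via the radix-shift argument, to the absence of an all-non-positive infinite path starting at the vertex $v$ --- a finite check on each graph --- and $\neg$(iii)$\Rightarrow\neg$(ii) is settled by \emph{exhibiting} one single-sign path per excluded polynomial (Table~\ref{tab:path}), an existence statement rather than a non-existence one. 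I would also note a small misreading of Figure~\ref{fig:0g}: the positively labeled edges out of $-v$ terminate at $-Av-(p-1)v$ and $-Av-pv$, not at the ``positive'' vertices; the conclusion (no single-sign infinite path) still holds, but only after following such a path for up to three edges. Reordering your chain to (iii)$\Rightarrow$(ii)$\Rightarrow$(i), (ii)$\Leftrightarrow$(iii), (iii)$\Rightarrow$(iv)$\Rightarrow$(ii) repairs both issues and is exactly what the paper does.
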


\begin{proof}
(i)$\Rightarrow$(ii)\quad Suppose $0\notin T^{\circ}$. Then $0\in
T\cap(T+\ell)$ for some $\ell\in\ZZ^{2}\setminus\{0\}$. Since
$(A,{\mathcal D})$ is a number system,
$\ell=\sum_{i=-k}^{0}a_{i}A^{-i}v$ with $a_{i}\in D$ and $a_{-k}>0$.
Hence $0=a_{-k}a_{-(k-1)}\ldots a_{-1}a_{0}.a_{1}a_{2}a_{3}\ldots.$
Shifting the radix point $k$ places to the left, we get
$0=a_{-k}.a_{-(k-1)}\ldots a_{-1}a_{0}a_{1}a_{2}a_{3}\ldots.$ That
means $T+a_{-k}v$ is a neighbor of $T$.  By
Proposition~\ref{prop:nbcoeff}, $a_{-k}=1$. Hence  $0$ corresponds
to an infinite path starting at $v$ with non-positive labels
$b_{i}=-a_{i}$. But by checking all the neighbor graphs in Appendix
A, we find no such path.

(ii)$\Rightarrow$(i)\quad It suffices to show that $\ZZ^{2}\subset
{\mathcal D}_{A,\infty}$. By the lattice tiling property,  $0$ is
the only lattice point in $T$, i.e., $\ZZ^{2}\cap T=\{0\}$. It
follows that $\ZZ^{2}\cap A^{n}T=\sum_{i=0}^{n-1}A^{i}{\mathcal
D}={\mathcal D}_{A,n}$ for $n\geq 1$. If $\ell\in \ZZ^{2}$, there
exists a large integer $n$ such that $\ell\in A^{n}T$ as $0\in
T^{\circ}$, then $\ell\in{\mathcal D}_{A,n}\subset {\mathcal
D}_{A,\infty}$.

(ii)$\Leftrightarrow$(iii)\quad By inspecting all neighbor graphs in
Appendix A, we find that in each graph corresponding to
$f(x)=x^{2}+px+q$ with $-1\leq p$ and $q\geq 2$, there exists no
infinite path with edge labels either all non-positive or all
non-negative, hence $0\in T^\circ$ by Corollary \ref{cor:origin}. In
every other case, there always exists such a path. All these paths
are listed in Table~\ref{tab:path}.

{\small
\begin{table}[htbp]
\centering
{\renewcommand{\arraystretch}
{1.5}
\renewcommand{\tabcolsep}{0.2cm}
\begin{tabular}{|c|c|c|}
\hline
$f(x)$&Neighbor&Path\\
\hline
$x^{2}-2x+2$ &$Av-v$&$ \overline{(-1)}$\\
&$-Av+v$ & $\overline{1}$\\
\hline
$x^{2}-px-q$&$v$&$\overline{p(q-1)}$\\
&$-v$&$\overline{(-p)[-(q-1)]}$\\
\hline
$x^{2}+px-q$&$Av+(p+1)v$&$\overline{(q-p-1)}$\\
&$-Av-(p+1)v$&$\overline{[-(q-p-1)]}$\\
\hline
$x^{2}-q$&$v$&$\overline{0(q-1)}$\\
&$-v$&$\overline{0[-(q-1)]}$\\
&$Av+v$&$\overline{(q-1)}$\\
&$-Av-v$&$\overline{[-(q-1)]}$\\
\hline
$x^{2}-px+q$&$Av-(p-1)v$&$\overline{[-(q-p+1)]}$\\
&$-Av+(p-1)v$&$\overline{(q-p+1)}$\\
\hline
\end{tabular}}
\vspace{0.2cm}
 \caption{Infinite paths representing a boundary point
$0$.}\label{tab:path}
\end{table}}

(iii)$\Rightarrow$(iv)\quad  Let $f(x)$ be one of the cases:
$x^{2}+q$, $x^{2}+x+q$, $x^{2}+px+q\;(p\geq 2
,\;\text{excluding}\;p=q=2)$, $x^{2}+2x+2$, $x^{2}-x+q$, where
$p\geq 0$ and $q\geq 2$.  In each case, we can rewrite their
neighbors as the desired form in (iv).  By using  $0=f(A)v$,\
$0=(A-I)f(A)v$,\ $0=(A+I)f(A)v$, we have

Case (1) $f(x)=x^{2}+q$. \quad $Av-v  = A^{2}v+Av+(q-1)v,\ -v =
A^{2}v+(q-1)v,\ -Av = A^{3}v+(q-1)Av,\ -Av+v  = A^{3}v+(q-1)Av+v,\
-Av-v  = A^{3}v+A^{2}v+(q-1)Av+(q-1)v.$

Case (2) $f(x)=x^{2}+x+q$. \quad $-v   =  A^{2}v+Av+(q-1)v,\ -Av =
A^{3}v+A^{2}v+(q-1)Av,\ -Av-v  = A^{2}v+(q-1)v.$

Case (3) $f(x)=x^{2}+px+q\;(p\geq 2)$. \quad $-v   =
A^{2}v+pAv+(q-1)v,\
 -Av-(p-1)v  = A^{2}v+(p-1)Av+(q-p+1)v,\ -Av-pv=A^{2}v+(p-1)Av+(q-p)v.$

Case (4) $f(x)=x^{2}+2x+2$. \quad $Av+2v  = A^{3}v+A^{2}v+Av,\ -v =
A^{4}v +A^{3}v+A^{2}v+v, \ -Av-v  =  A^{2}v+Av+v,\ -Av-2v  =
A^{2}v+Av.$

Case (5) $f(x)=x^{2}-x+q$. \quad $Av-v  =  A^{2}v+(q-1)v,\ -v =
A^{3}v+(q-1)Av+(q-1)v,\  -Av  = A^{4}v+(q-1)A^{2}v+(q-1)Av,\
-Av+v=A^{4}v+(q-1)A^{2}v+(q-1)Av+v.$

(iv)$\Rightarrow$(ii)\quad Suppose $0\notin T^{\circ}$. By the same
argument as in the proof of (i)$\Rightarrow$(ii) above, there should
be an infinite path in the neighbor graph starting at $v$ with edge
labels all non-positive. But we find no such path by inspecting all
the neighbor graphs in Appendix A.
\end{proof}

\begin{Rem}
Gilbert \cite{[Gi]} obtained some related results in the context of
quadratic number fields.  We conjecture that Theorem
\ref{thm:numbsys} can be extended to non-disk-like tiles.
\end{Rem}

\end{section}

\bigskip
\begin{section}{\bf Dimension of the boundary of $T$}

For a directed graph $\mathcal{G}=\mathcal{G(V,E)}$ where
$\mathcal{V}=\{v_{1},v_{2},\dots,v_{m}\}$, we  write
$\mathcal{E}_{i,j}$ for the set of edges from vertex $v_{i}$ to
vertex $v_{j}$, and we add a contraction mapping
$F_{e}:\RR^{2}\rightarrow\RR^{2}$ for each edge $e\in\mathcal{E}$.
Then the family of contractions $\{F_e: e\in{\mathcal E}\}$ is
called a {\it graph-directed iterated function system
(GIFS)} and there exists a unique family of non-empty compact subsets $E_{1},\dots,E_{m}$ of
$\RR^{2}$ (\cite{[Fa2]}, \cite{[MW]}) such that
\begin{equation}\label{eq:GDsets}
E_{i}=\bigcup_{j=1}^{m}\bigcup_{e\in\mathcal{E}_{i,j}}F_{e}(E_{j}).
\end{equation}
We call $E:=\bigcup_{i=1}^mE_i$ a {\it graph-directed set}.  Define $M=(M_{ij})_{1\leq i,j\leq m}$ as the {\it contact matrix} \cite{[GH]} of $\mathcal{G}$ with $M_{ij}=\#\mathcal{E}_{i,j}$ counting the number of edges from $v_i$ to $v_j$.

The GIFS $\{F_e: e\in{\mathcal E}\}$ is said to
satisfy the {\it open set condition (OSC)} if there exist a family
 of open sets $\{O_{1}, \dots,O_{m}\}$  such that
\begin{equation}\label{eq:GDOSC}
O_{i}\supset\bigcup_{j=1}^{m}\bigcup_{e\in\mathcal{E}_{i,j}}F_{e}(O_{j})\quad\text{for}\;
i=1,2,\dots,m
\end{equation} with  disjoint unions, i.e., $F_{e}(O_{j})\cap F_{e'}(O_{j'})=\emptyset$
whenever $(e,j)\neq (e',j')$. With this OSC, we then can compute the dimension of the graph-directed set.

In this section, we first identify the boundary of $T$ with a graph-directed set by making use of the well-known method (\cite{[SW]}, \cite{[HLR]}), then calculate its dimension in the self-affine case and the self-similar case, respectively.

\begin{Prop}\label{prop:union}
Let $\ell=\gamma v+\delta Av$, $\ell'=\gamma' v+\delta' Av
\in\mathcal{V}$ such that $\ell'=A\ell-b_{1}v$ for some
$b_{1}\in\Delta D$, then
\begin{equation*}
A^{-1}(T_{\ell'}+jv)\subset T_{\ell}\quad\text{for all}\quad j\in
I_{b_{1}}:= \left\{\begin{array}{ll}
\{b_{1},b_{1}+1,\dots,q-1\}&\text{if}\quad b_{1}\geq 0;\\
\{0,1,\dots,q-1+b_{1}\}&\text{if}\quad b_{1}<0.
\end{array}\right.
\end{equation*}
Moreover,
\begin{equation*}
T_{\ell}=\bigcup_{\ell'\in B_{\ell}}\bigcup_{j\in
I_{b_{1}}}A^{-1}(T_{\ell'}+jv)
\end{equation*} where $B_{\ell}:=\{\ell''\in\mathcal{V}: \ell''=A\ell-b_1'v \ \text{for
some}\ b_1'\in\Delta D\}$. Hence the boundary $\partial T=\bigcup_{\ell\in {\mathcal V}}T_\ell$ is a graph-directed set.
\end{Prop}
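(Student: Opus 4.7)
The plan is to prove both the inclusion and the equality by manipulating the two radix expansions that any boundary point of $T_\ell$ possesses, exploiting the fact that multiplication by $A$ shifts the radix place. Once these set-theoretic equalities are established, realizing $\partial T$ as a graph-directed set is immediate from the decomposition $\partial T=\bigcup_{\ell\in\mathcal V}T_\ell$ in (\ref{eq:bdryT}).

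For the forward inclusion $A^{-1}(T_{\ell'}+jv)\subset T_\ell$ I would take $x\in T_{\ell'}$ with two expansions
\[
x=\sum_{i=1}^{\infty}a_iA^{-i}v=\ell'+\sum_{i=1}^{\infty}a_i'A^{-i}v,\qquad a_i,a_i'\in D,
\]
and compute $A^{-1}(x+jv)=jA^{-1}v+A^{-1}x$ in two ways. Substituting the first expansion exhibits $A^{-1}(x+jv)$ as a radix expansion beginning with digit $j$, so it lies in $T$ iff $j\in D$. Substituting the second expansion and using $A^{-1}\ell'=\ell-b_1A^{-1}v$ (a consequence of $\ell'=A\ell-b_1v$) rewrites the point as $\ell+(j-b_1)A^{-1}v+A^{-1}\!\sum a_i'A^{-i}v$, which lies in $T+\ell$ iff $j-b_1\in D$. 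The conjunction is precisely $j\in D\cap(D+b_1)=I_{b_1}$.

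For the reverse inclusion take $x\in T_\ell$ with expansions $x=\sum a_iA^{-i}v=\ell+\sum a_i'A^{-i}v$ and set $b_i:=a_i-a_i'\in\Delta D$, so that $\ell=\sum b_iA^{-i}v$. Applying $A$ and subtracting $a_1v$ from both sides of each expansion yields
\[
Ax-a_1v=\sum_{i=1}^{\infty}a_{i+1}A^{-i}v=\ell'+\sum_{i=1}^{\infty}a_{i+1}'A^{-i}v,
\]
where $\ell':=A\ell-b_1v$, which is a genuine neighbor by Lemma~\ref{neighbor generator}; hence $y:=Ax-a_1v\in T_{\ell'}$ and $\ell'\in\mathcal V\cap B_\ell$. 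Setting $j:=a_1$ gives $x=A^{-1}(y+jv)$, and because $a_1,\,a_1'=a_1-b_1\in D$ we have $j\in I_{b_1}$, placing $x$ in the asserted union.

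The main (and only) bookkeeping issue will be ensuring the compatibility condition $j\in D\cap(D+b_1)$ emerges with the correct endpoints as stated; the case split $b_1\ge 0$ versus $b_1<0$ is handled automatically by intersecting $\{0,\dots,q-1\}$ with its translate $\{b_1,\dots,b_1+q-1\}$. No delicate topological input is required, since the argument is purely combinatorial on radix expansions. Combining the two inclusions with (\ref{eq:bdryT}) then displays $\partial T$ as the attractor of a GIFS whose vertex set is $\mathcal V$, whose edges from $\ell$ to $\ell'\in B_\ell$ carry a label $b_1\in\Delta D$, and whose contractions are $y\mapsto A^{-1}(y+jv)$ indexed by $j\in I_{b_1}$.
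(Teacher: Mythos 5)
Your proof is correct and takes essentially the same route as the paper's: both arguments manipulate the two radix expansions of a point of $T_{\ell}=T\cap(T+\ell)$ and use the shift $y\mapsto Ay-a_{1}v$ together with $\ell'=A\ell-b_{1}v$, the paper merely writing the computation in radix notation ($0.(b_{1}+k)c_{1}c_{2}\ldots=\delta\gamma.kc'_{1}c'_{2}\ldots$) where you write explicit sums and package the digit constraint as $j\in D\cap(D+b_{1})=I_{b_{1}}$. There is no substantive difference.
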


\begin{proof}
When $b_{1}\geq 0$, if $x\in T_{\ell'}$ then the radix expansion is
\begin{equation*}
x=0.c_{1}c_{2}c_{3}\ldots=\delta'\gamma'.c'_{1}c'_{2}c'_{3}\ldots.
\end{equation*}
It follows from Lemma \ref{neighbor generator} and $0=A^{-1}f(A)v$
that
\begin{equation*}
A^{-1}x+(b_{1}+k)A^{-1}v=0.(b_{1}+k)c_{1}c_{2}c_{3}\ldots=\delta\gamma.kc'_{1}c'_{2}c'_{3}\ldots\in
T_{\ell}
\end{equation*}
for $k=0,1,\dots,q-1-b_{1}$. The case when $b_{1}< 0$ can be proved
similarly.

For the second part,  we only need to show
\begin{equation*}
T_{\ell}\subset \bigcup_{\ell'\in B_{\ell}}\bigcup_{j\in
I_{b_{1}}}A^{-1}(T_{\ell'}+jv).
\end{equation*}
Let
$y=0.a_{1}a_{2}a_{3}\ldots=\delta\gamma.a_{1}'a_{2}'a_{3}'\ldots\in
T_{\ell}$.  It follows that $Ay-a_{1}v=0.a_{2}a_{3}a_{4}\ldots$
$=\delta\gamma(a_{1}'-a_{1}).a_{2}'a_{3}'\ldots\in T_{\ell'}$, where
$\ell'=A\ell-(a_{1}-a_{1}')v$.  This implies $y\in
A^{-1}(T_{\ell'}+a_{1}v)$. By definition, we see that $a_{1}\in
I_{b_{1}}$ for $b_{1}=a_{1}-a_{1}'$.
\end{proof}

It should be mentioned that the graph for the GIFS comes from the neighbor graph by adding more edges, or equivalently the neighbor graph is a reduced graph for the GIFS. The following example about Figure \ref{fig:0g} can illustrate their relationship. All the other cases are given in Appendix B.

\begin{Exa}\label{eg:gdset}
Consider the case $f(x)=x^{2}+px+q\;(p,q\geq
2,\;\text{excluding}\;p=q=2)$. When $\ell=v$, from Table
\ref{nbtb:x^2+px+q} we have $B_{\ell}=B_{v}=\{Av+pv,Av+(p-1)v\}$.
When $\ell'=Av+pv$, $b_{1}=-p$ and $I_{-p}=\{0,1,2,\dots,q-1-p\}$;
when $\ell'=Av+(p-1)v$, $b_{1}=-(p-1)$ and
$I_{-(p-1)}=\{0,1,2,\dots,q-p\}$.  Thus by Proposition
\ref{prop:union}, the first set equation comes out. Similarly the
other five can be deduced.  For simplicity we let $u_{1}=v$,
$u_{2}=Av+(p-1)v$, $u_{3}=Av+pv$.  Then the sets
$T_{\pm u_{1}},\ T_{\pm u_{2}},\ T_{\pm u_{3}}$, representing
$\partial T$ satisfy
\begin{eqnarray*}
AT_{u_{1}} &= &
\bigcup_{j=0}^{q-p} (T_{u_{2}}+jv)\cup\bigcup_{j=0}^{q-p-1} (T_{u_{3}}+jv)\\
AT_{u_{2}} &= &\bigcup_{j=0}^{p-2} (T_{-u_{2}}+jv)\cup\bigcup_{j=0}^{p-1} (T_{-u_{3}}+jv)\\
AT_{u_{3}} &= & T_{-u_{1}}\\
AT_{-u_{1}} &= &
\bigcup_{j=p-1}^{q-1} (T_{-u_{2}}+jv)\cup\bigcup_{j=p}^{q-1} (T_{-u_{3}}+jv)\\
AT_{-u_{2}} &= & \bigcup_{j=q-p+1}^{q-1} (T_{u_{2}}+jv)\cup\bigcup_{j=q-p}^{q-1} (T_{u_{3}}+jv)\\
AT_{-u_{3}} &= & T_{u_{1}}+(q-1)v\\
\end{eqnarray*}
\end{Exa}

The Hausdorff dimension ($\dim_H$) (see e.g., \cite{[Fa]}, \cite{[Fa2]}) is the most common and important dimension in fractal geometry. The case of self-similar sets has been studied extensively with or without separation conditions. However the case of self-affine sets is still hard to handle. Recently, He and Lau \cite{[HL]} defined the generalized Hausdorff dimension ($\dim_H^{\omega}$) and Hausdorff measure (${\mathcal H}_{\omega}^s$) for self-affine fractals by replacing the Euclidean norm with a pseudo-norm $\omega$ for which the expanding matrix $A$ becomes a similarity: $$\omega(Ax)=|\det{A}|^{1/2}\omega(x).$$
Under this setting, most of the basic properties for the self-similar sets can be carried to the self-affine sets. Moreover, Luo and Yang \cite{[LY]} extended this technique to the self-affine GIFS and obtained a dimension formula of the graph-directed set we need.

\begin{Prop}(\cite{[LY]}) \label{thm:LuoYang}
For the GIFS as in (\ref{eq:GDsets}) with the affine mappings $F_e(x)=A^{-1}(x+d_e)$ where $A$ is an expanding matrix and $|\det A|=|q|$, let $\rho(M)$ be the spectral radius of the contact matrix $M$. If the OSC holds, then $s=\dim_H^{\omega}E=2\log\rho(M)/\log{|q|}$ and $0<{\mathcal H}_{\omega}^s(E)<\infty$.
\end{Prop}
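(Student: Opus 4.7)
The plan is to reduce the self-affine GIFS to a \emph{self-similar} GIFS in the pseudo-metric induced by $\omega$, and then to invoke the Mauldin--Williams theorem for graph-directed self-similar sets satisfying the OSC. Since $\omega(Ax)=|q|^{1/2}\omega(x)$ by the defining property of the pseudo-norm, for any edge $e\in\mathcal{E}$ and any $x,y\in\RR^{2}$ we have
\begin{equation*}
\omega\bigl(F_{e}(x)-F_{e}(y)\bigr)=\omega\bigl(A^{-1}(x-y)\bigr)=|q|^{-1/2}\,\omega(x-y).
\end{equation*}
Thus every $F_{e}$ is a similitude in the $\omega$-metric with the \emph{same} contraction ratio $r:=|q|^{-1/2}$, independent of $e$. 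In other words, viewed through $\omega$ the GIFS is a uniformly contracting self-similar GIFS, and the invariant family $\{E_{i}\}$ is unchanged since the set equations in (\ref{eq:GDsets}) are purely set-theoretic.

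Next, I would transcribe the Mauldin--Williams dimension formula into this setting. For a self-similar GIFS on a directed graph with uniform ratio $r$, the Hausdorff dimension of the graph-directed set equals the unique $s$ for which the spectral radius of the weighted contact matrix $M^{(s)}=(r^{s}\,\#\mathcal{E}_{ij})=r^{s}M$ equals $1$. Since $\rho(r^{s}M)=r^{s}\rho(M)$, solving
\begin{equation*}
r^{s}\rho(M)=|q|^{-s/2}\rho(M)=1
\end{equation*}
gives $|q|^{s/2}=\rho(M)$, i.e.\ $s=2\log\rho(M)/\log|q|$. Positivity and finiteness of $\mathcal{H}^{s}_{\omega}(E)$ follow from the OSC together with the Perron--Frobenius structure of $M$: the strongly connected components of the graph produce a right Perron eigenvector whose entries can be used to gauge each $E_{i}$ from above (via the natural cover by cylinders $F_{e_{1}}\circ\cdots\circ F_{e_{n}}(E_{j})$ of $\omega$-diameter $r^{n}\,\mathrm{diam}_{\omega}(E_{j})$) and from below (via the disjoint open sets $O_{i}$ in (\ref{eq:GDOSC}) and a mass distribution argument).

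The main obstacle is justifying that the classical Mauldin--Williams machinery survives the move from the Euclidean norm to the pseudo-norm $\omega$. Two properties need checking: (a) $\omega$ induces a genuine translation-invariant metric on $\RR^{2}$ with well-behaved coverings, so that $\mathcal{H}^{s}_{\omega}$ is a Borel regular measure comparable to the mass of any $s$-dimensional self-similar object in the $\omega$-metric; and (b) the OSC in (\ref{eq:GDOSC}), stated for Euclidean open sets, remains an OSC in the $\omega$-metric—this is automatic because $\omega$ is continuous and the topology it induces coincides with the Euclidean topology (both are translation-invariant and $\omega$ is positive, continuous, and homogeneous of weight $1/2$ under $A$). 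With (a) established along the lines of He--Lau \cite{[HL]}, the standard upper bound via the $n$-th level cylinder cover and the lower bound via a Perron mass distribution supported on infinite paths go through verbatim in the $\omega$-setting, completing the proof. The uniformity of the contraction ratio across all edges is what makes the single spectral-radius equation work cleanly; without the pseudo-norm one would be forced into a matrix pressure equation that is considerably harder to solve in closed form.
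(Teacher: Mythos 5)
The paper does not prove this proposition at all: it is imported verbatim from the reference \cite{[LY]} (Luo--Yang), so there is no internal argument to compare against. Your reconstruction follows exactly the route that the cited literature takes: the defining property $\omega(Ax)=|q|^{1/2}\omega(x)$ turns every $F_e=A^{-1}(\cdot+d_e)$ into a similitude of uniform ratio $|q|^{-1/2}$ in the $\omega$-quasi-metric, after which the Mauldin--Williams/Perron--Frobenius equation $r^{s}\rho(M)=1$ yields $s=2\log\rho(M)/\log|q|$; and the foundational issues you isolate in (a) and (b) --- that $\omega$ only satisfies a quasi-triangle inequality, that $\mathcal{H}^{s}_{\omega}$ is Borel regular, and that the Euclidean OSC transfers --- are precisely the content of He--Lau \cite{[HL]}. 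So the sketch is essentially right and essentially the intended proof.

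One point deserves more care than your sketch gives it: the claim $0<\mathcal{H}^{s}_{\omega}(E)<\infty$ is \emph{not} automatic from the OSC when the contact matrix $M$ is reducible. In a general graph-directed construction, if two distinct strongly connected components both attain the spectral radius $\rho(M)$ and one is reachable from the other, the measure at the critical exponent is infinite; Mauldin--Williams give the exact combinatorial condition. Your phrase about ``the strongly connected components \dots produce a right Perron eigenvector'' is only rigorous in the irreducible case. This matters here because the paper does apply the proposition to reducible matrices (the cases $f(x)=x^{2}\pm q$, where the characteristic polynomial of $M$ even has $\sqrt{q}$ as a repeated root); the conclusion survives only because, as one checks from Appendix B, the two maximal components $\{\pm v,\pm Av\}$ do not communicate with each other, so $E$ splits into finitely many pieces each carrying positive finite measure. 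A complete proof should either assume irreducibility, or verify this non-communication condition for each reducible case.
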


By using this, we can establish our first dimensional result about the boundary of $T$ as follows.

\begin{theorem}\label{thm:OSC}
The generalized Hausdorff dimension of the boundary of disk-like CC tile $T$ is $$\dim_H^{\omega}(\partial T)=2\log\rho(M)/\log{|q|}$$ and the corresponding measure is positive and finite.
\end{theorem}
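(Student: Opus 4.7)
The plan is to reduce Theorem \ref{thm:OSC} to Proposition \ref{thm:LuoYang}, for which only the open set condition (OSC) needs to be verified, since the graph-directed structure and contact matrix have already been identified in Proposition \ref{prop:union} and Example \ref{eg:gdset} (and Appendix B). The contractions here are $F_{(\ell,\ell'),j}(x)=A^{-1}(x+jv)$ with $j\in I_{b_1}$, so Proposition \ref{thm:LuoYang} applies word-for-word once we exhibit admissible open sets $\{O_\ell\}_{\ell\in\mathcal{V}}$.

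To construct the $O_\ell$'s, I would invoke the result of Akiyama--Loridant that for a disk-like CC tile $\partial T$ is a Jordan curve. Consequently, each $T_\ell = T\cap (T+\ell)$ is either a subarc of $\partial T$ or a single point where three tiles meet. Let $E_\ell$ be the (finite) set of such triple points lying in $T_\ell$; these are exactly the endpoints where $T_\ell$ meets the other $T_{\ell''}$. I would then take $O_\ell$ to be a sufficiently thin open tubular neighborhood in $\RR^2$ of the relatively open arc $T_\ell\setminus E_\ell$, the width chosen uniformly small in the pseudo-norm $\omega$ so that all distortions under a single application of $A^{-1}$ stay controlled.

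Next I would verify the two parts of the OSC. The inclusion $O_\ell\supset \bigcup_{\ell'\in B_\ell}\bigcup_{j\in I_{b_1}} A^{-1}(O_{\ell'}+jv)$ follows from Proposition \ref{prop:union}: the set-equation already gives the corresponding inclusion for $T_\ell$, and passing to $\omega$-neighborhoods is compatible because $A^{-1}$ scales $\omega$ by $|q|^{-1/2}$, so for any prescribed width of $O_\ell$ one can choose the widths of the $O_{\ell'}$ small enough. Disjointness is the heart of the matter: one must show that for distinct pairs $(\ell',j)\neq(\ell'',j')$ the images $A^{-1}(T_{\ell'}+jv)$ and $A^{-1}(T_{\ell''}+j'v)$ meet only in $E_\ell$, i.e., they provide an arc-by-arc decomposition of the Jordan arc $T_\ell$. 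This in turn reduces to the fact that the GIFS images realize $T_\ell$ as a concatenation of subarcs along the Jordan curve $\partial T$, meeting only at points of $E_\ell$; after shrinking the tubular widths, the corresponding images of the $O_{\ell'}$ become pairwise disjoint.

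The main obstacle is precisely this disjointness step, because the arcs $T_\ell$ may touch at several triple points and one must verify that the GIFS subdivision of each $T_\ell$ is non-overlapping in the arc-theoretic sense. I would handle it by noting that the tiling $T+\mathcal{J}=\RR^2$ descends (after applying $A^{-1}$) to a refined tiling in which each subtile contributes exactly one subarc to $T_\ell$; equivalently, using the radix expansions in the proof of Proposition \ref{prop:union}, any overlap of two images $A^{-1}(T_{\ell'}+jv)\cap A^{-1}(T_{\ell''}+j'v)$ forces a common radix prefix of length one with different first digits, which is only possible at triple points. Alternatively, one can check this directly for each characteristic polynomial using the tables in Appendices A and B; the verification is finite and mechanical. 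Once the OSC is in hand, Proposition \ref{thm:LuoYang} yields $\dim_H^{\omega}(\partial T)=2\log\rho(M)/\log|q|$ and the positivity and finiteness of $\mathcal{H}_\omega^s(\partial T)$, completing the proof.
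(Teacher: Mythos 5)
Your overall reduction to Proposition \ref{thm:LuoYang} matches the paper's, but the construction of the open sets --- which is the only substantive step --- does not work as you describe. If two of the pieces $A^{-1}(T_{\ell'}+jv)$ and $A^{-1}(T_{\ell''}+j'v)$ are adjacent subarcs of the Jordan arc $T_{\ell}$ sharing an endpoint $x_{0}$, then uniform-width tubular neighborhoods of them can never be disjoint: any point $y$ on the first arc with $0<d(y,x_{0})<\epsilon/2$ satisfies $d(y,\,\text{second arc})\leq d(y,x_{0})<\epsilon$, so $y$ lies in both $\epsilon$-tubes for every $\epsilon>0$. Thus the disjointness half of the OSC fails for your $\{O_{\ell}\}$ no matter how thin you take the tubes; shrinking the width cannot repair it. A construction whose width pinches to $0$ at the subdivision points might be salvageable, but that is a different and far more delicate argument than the one you sketch, and note that those subdivision points are triple points of the \emph{refined} tiling $A^{-1}(T+\ZZ^{2})$, not points of your set $E_{\ell}$, so your claim that the images ``meet only in $E_{\ell}$'' is also not correct as stated. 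A further degeneration occurs for $f(x)=x^{2}\pm q$: the corner neighbors have $T_{\ell}$ equal to a single multiple-contact point, so $T_{\ell}\setminus E_{\ell}=\emptyset$ and your $O_{\ell}$ is empty, while the set equations in Appendix B still demand nontrivial inclusions for these vertices.

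The paper sidesteps all of this by choosing $O_{\ell}=(T+\ell)^{\circ}$, the interior of the neighboring \emph{tile} rather than any neighborhood of the boundary piece $T_{\ell}$ (the OSC does not require $T_{\ell}\subset O_{\ell}$). The inclusions $A(T+\ell)^{\circ}\supset\bigcup_{\ell'}\bigcup_{j}\big((T+\ell')^{\circ}+jv\big)$ are then verified by pure algebra from the tiling identity $AT^{\circ}\supset\bigcup_{j=0}^{q-1}(T^{\circ}+jv)$ together with $f(A)v=0$, and the disjointness is inherited from the fact that distinct lattice translates of $T^{\circ}$ are disjoint; no Jordan-curve or arc-decomposition input is needed at all. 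You correctly identify disjointness as the heart of the matter, but as written your proof does not establish it; you should either rework the argument with the tile interiors as the open sets, or genuinely carry out a separating construction for the arcs rather than appealing to ``sufficiently thin'' uniform tubes.
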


\begin{proof}
From Propositions \ref{prop:union} and \ref{thm:LuoYang}, it suffices to show the GIFS representing the boundary of $T$ satisfies the OSC. Replacing $T_{\ell}$ by $(T+\ell)^\circ$, we can check the OSC holds case by case.  We illustrate the idea by proving the case $f(x)=x^{2}+px+q \ (p\geq 2,
q\geq 2,\ \text{excluding} \  p=q=2)$. In view of Example \ref{eg:gdset}, we need to show
\begin{eqnarray*}
A(T+u_{1})^{\circ} &\supset &
\bigcup_{j=0}^{q-p}\big((T+u_{2})^{\circ}+jv\big)
 \cup\bigcup_{j=0}^{q-p-1}\big((T+u_{3})^{\circ}+jv\big)\\
A(T+u_{2})^{\circ}  &\supset &
\bigcup_{j=0}^{p-2}\big((T-u_{2})^{\circ}+jv\big)
\cup\bigcup_{j=0}^{p-1}\big((T-u_{3})^{\circ}+jv\big)\\
A(T+u_{3})^{\circ} &\supset & (T-u_{1})^{\circ}\\
A(T-u_{1})^{\circ} &\supset & \bigcup_{j=p-1}^{q-1}\big((T-u_{2})^
{\circ}+jv\big)
 \cup\bigcup_{j=p}^{q-1} \big((T-u_{3})^{\circ}+jv\big)\\
A(T-u_{2}\big)^{\circ} &\supset & \bigcup_{j=q-p+1}^{q-1}
\big((T+u_{2})^{\circ}+jv\big)
 \cup\bigcup_{j=q-p}^{q-1} \big((T+u_{3})^{\circ}+jv\big)\\
A(T-u_{3})^{\circ} &\supset & (T+u_{1})^{\circ}+(q-1)v
\end{eqnarray*}
with disjoint unions. Since  $T$ is a CC tile, it follows that
\begin{equation}\label{eq:interior}
AT^{\circ}\supset\bigcup_{j=0}^{q-1}(T+jv)^{\circ}=\bigcup_{j=0}^{q-1}(T^{\circ}+jv)
\end{equation}
with disjoint union. By using  (\ref{eq:interior}) and
$0=f(A)v=A^{2}v+pAv+qv$ extensively, we prove the first two set
inequalities in the following. The remaining four can be verified
similarly.

For $j=0,1,\ldots,q-p$,
\begin{equation*}
\big(T+u_{2}\big)^{\circ}+jv = T^{\circ}+(p-1+j)v+Av \subset
A(T+u_{1})^{\circ}.
\end{equation*}

For $j=0,1,\ldots,q-p-1$,
\begin{equation*}
(T+u_{3})^{\circ}+jv = T^{\circ}+(p+j)v+Av \subset
A(T+u_{1})^{\circ}.
\end{equation*}

For $j=0,1,\ldots,p-2$,
\begin{eqnarray*}
\big(T-u_{2}\big)^{\circ}+jv &=& T^{\circ}+(j-p+1)v-Av\\
& =&  T^{\circ}+(q+j-p+1)v +A^{2}v+(p-1)Av\\
& \subset & A(T+u_{2})^{\circ}.
\end{eqnarray*}

For $j=0,1,\ldots,p-1$,
\begin{eqnarray*}
(T-u_{3})^{\circ}+jv &=& T^{\circ}+(j-p)v-Av\\
& = & T^{\circ}+(q+j-p)v+A^{2}v+(p-1)Av\\
& \subset &  A(T+u_{2})^{\circ}.
\end{eqnarray*}

By the same way, all the other cases follow and hence the theorem is
proved.
\end{proof}

In the rest  of this section, we will find the exact value of Hausdorff dimension $\dim_H (\partial T)$ for certain
particular cases that $A$ is a similarity. We state the simplest one first.

\begin{Prop}\label{prop:dim//gm}Let $A\in M_{2}(\ZZ)$ be expanding with characteristic
polynomial $f(x)=x^{2}+ q\ (|q|\geq 2)$ and $T(A,{\mathcal D})$  a
disk-like CC tile.  Then $\dim_H (\partial T)=1$.
\end{Prop}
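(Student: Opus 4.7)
The plan is to apply Theorem~\ref{thm:OSC} and reduce the claim to a spectral radius computation for the contact matrix.

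Since $f(A)=0$ forces $A^{2}=-qI$, the matrix $A$ is a similarity of ratio $\sqrt{|q|}$: a scaled rotation by $\pi/2$ when $q\geq 2$ and a scaled reflection when $q\leq -2$. Consequently, the pseudo-norm $\omega$ of \cite{[HL]} may be taken as a constant multiple of the Euclidean norm, so $\dim_{H}^{\omega}(\partial T)=\dim_{H}(\partial T)$. Theorem~\ref{thm:OSC} then yields
\[
\dim_{H}(\partial T)=\frac{2\log\rho(M)}{\log|q|},
\]
so everything reduces to proving $\rho(M)=\sqrt{|q|}$.

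To identify $M$, I would first read off the neighbor graph of $T$ for $f(x)=x^{2}+q$ from Appendix~A. Since $p=0$, Theorem~\ref{disk-likeness thm} tells us that $T$ is a square-type tile, and together with Proposition~\ref{prop:nbcoeff} and Lemma~\ref{neighbor generator} this pins down the neighbor set $\mathcal{V}$ (and in particular its cardinality). From the corresponding GIFS (Appendix~B), I would then write out the contact matrix $M$ whose $(i,j)$-entry is the number of edges of the neighbor graph from $\ell_{i}$ to $\ell_{j}$.

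Finally I would compute $\rho(M)$. The $\ell\leftrightarrow -\ell$ symmetry of $\mathcal{V}$ makes $M$ commute with the involution sending each neighbor to its opposite, so $M$ decomposes into two blocks of equal size; this is the mechanism that simplifies the eigenvalue computation. I expect the characteristic polynomial of each block to factor and expose $x^{2}-|q|$, with the other factor having all roots of modulus strictly less than $\sqrt{|q|}$. This gives $\rho(M)=\sqrt{|q|}$ and hence $\dim_{H}(\partial T)=1$. The main obstacle is precisely this explicit factorization of $\det(xI-M)$ from the tabulated data; as a useful sanity check, setting $p=0$ in the cubic $x^{3}-(|p|-1)x^{2}-(|q|-|p|)x-|q|$ of Theorem~\ref{mainthm} produces $(x+1)(x^{2}-|q|)$, whose largest positive root is $\sqrt{|q|}$, in agreement with the claim.
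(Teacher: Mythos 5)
Your route is genuinely different from the paper's, and it contains one real error. The paper's entire proof is a one-liner: by Theorem~\ref{disk-likeness thm}, when $p=0$ the tile $T$ is a square tile, i.e.\ literally a parallelogram with sides parallel to $v$ and $A^{-1}v$ (since $A^{2}v=-qv$, the even and odd radix places decouple into two independent one-dimensional expansions), so $\partial T$ is a union of four line segments and has dimension $1$ with no spectral computation at all. Your detour through Theorem~\ref{thm:OSC} and the contact matrix is workable in principle --- the paper itself carries out exactly the computation you defer to ``the main obstacle'' in Cases (3) and (4) of Theorem~\ref{thm:hsdf-dim}, where the characteristic polynomials of $M$ are $(x^{2}-q)(x^{2}+q)(x-1)(x+1)(x^{2}+1)$ and $(x^{2}-q)^{2}(x+1)(x-1)^{3}$, giving $\rho(M)=|q|^{1/2}$ --- but it is much heavier machinery for this statement.

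The genuine gap is your first sentence: $A^{2}=-qI$ does \emph{not} force $A$ to be a similarity. A similarity is a scalar multiple of an orthogonal matrix, and this is not determined by the characteristic polynomial; for instance $A=\left(\begin{smallmatrix}1&-3\\ 1&-1\end{smallmatrix}\right)$ is expanding with characteristic polynomial $x^{2}+2$, yet $A^{T}A=\left(\begin{smallmatrix}2&-4\\ -4&10\end{smallmatrix}\right)$ is not a multiple of $I$. This matters because the Proposition (unlike Theorem~\ref{mainthm}) does not assume $A$ is a similarity, and Theorem~\ref{thm:OSC} only delivers $\dim_{H}^{\omega}$, so without the similarity claim your argument stops at $\dim_{H}^{\omega}(\partial T)=1$ rather than $\dim_{H}(\partial T)=1$. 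The step is repairable: since both eigenvalues of $A$ have modulus $|q|^{1/2}=|\det A|^{1/2}$ and $A$ is diagonalizable, $A$ is \emph{conjugate} to a similarity by some $P\in GL_{2}(\RR)$, so one may take $\omega(x)=|P x|$, which is bi-Lipschitz equivalent to the Euclidean norm and hence gives $\dim_{H}^{\omega}=\dim_{H}$ --- but you need to say this, not assert that $A$ itself is a similarity. For the same reason your closing sanity check via Theorem~\ref{mainthm} is only heuristic here, as that theorem carries the similarity hypothesis.
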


\begin{proof}
By Theorem \ref{disk-likeness thm},  $T$ is a square tile (parallelogram). Hence $\dim_H (\partial T)=1$.
\end{proof}

Geometrically, a similarity is a multiple of either a reflection or
a rotation.  We call the former a {\it scaled reflection} and the
latter a {\it scaled rotation}; algebraically, a similarity is a
multiple of an orthogonal matrix. The case that $A$ is a scaled
reflection is solved already as its characteristic polynomial is of
the form $f(x)=x^{2}-q\ (q>0)$. So we focus our attention on those
$A$ that are scaled rotations.

\begin{Lem}\label{lem:rotate}
Let $A$ be a scaled rotation.  Then its characteristic polynomial
has positive constant term and $A$ has either two distinct non-real
eigenvalues or two equal real eigenvalues.

\end{Lem}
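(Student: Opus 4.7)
The plan is to exploit the explicit normal form of a scaled rotation. Writing $A = r R$ where $r > 0$ is the scaling factor and $R$ is a planar rotation through some angle $\theta$, I would set $a = r\cos\theta$ and $b = r\sin\theta$ so that
\[
A = \begin{pmatrix} a & -b \\ b & a \end{pmatrix},
\]
and hence $f(x) = x^{2} - 2ax + (a^{2}+b^{2})$. Since $A \in M_2(\ZZ)$, both $a$ and $b$ are automatically integers, but this is not actually needed for either conclusion.

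The positive-constant-term assertion is then immediate: the constant term of $f$ equals $\det A = a^{2} + b^{2} = r^{2}$, which is strictly positive because $r > 0$. (As $A$ is expanding one even has $q = a^{2}+b^{2} \geq 2$, consistent with the hypothesis used throughout the paper.)

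For the eigenvalue dichotomy, I would compute the discriminant $\Delta = (-2a)^{2} - 4(a^{2}+b^{2}) = -4b^{2} \leq 0$. Two subcases then exhaust the situation. If $b \neq 0$, then $\Delta < 0$, and the eigenvalues $a \pm bi$ form a genuine non-real complex conjugate pair, hence are distinct and non-real. If $b = 0$, then $\sin\theta = 0$ forces $a = \pm r$ and $A = aI$; the characteristic polynomial $(x - a)^{2}$ then has $a$ as a double real root, yielding two equal real eigenvalues.

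There is no substantial obstacle; the content of the lemma is essentially the $2\times 2$ polar form for matrices of positive determinant. The one point worth flagging is the implicit convention that a scaled rotation has positive determinant, as opposed to a scaled reflection, which has negative determinant. Since the excerpt has already isolated the reflection case as $f(x) = x^{2} - q$ and handled it in Proposition~\ref{prop:dim//gm}, this convention is precisely the one being invoked here, and the above argument does not need to address the orientation-reversing situation.
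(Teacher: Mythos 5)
Your proof is correct and follows essentially the same route as the paper: both write $A$ in the explicit form $\begin{pmatrix} r\cos\theta & -r\sin\theta \\ r\sin\theta & r\cos\theta \end{pmatrix}$, read off the characteristic polynomial $x^{2}-2r\cos\theta\,x+r^{2}$, and observe that the constant term is $r^{2}>0$ while the roots are equal and real exactly when $\sin\theta=0$. Your added discriminant computation and the remark on the orientation convention are just slightly more explicit versions of what the paper leaves implicit.
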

\begin{proof}
Let $A=\left(\begin{array}{cc}
r\mathrm{cos}\theta&-r\mathrm{sin}\theta\\
r\mathrm{sin}\theta&r\mathrm{cos}\theta
\end{array}\right)$.  The  characteristic polynomial is given
by $x^{2}-2r\mathrm{cos}\theta\,x+r^{2}$.  It has two equal real
zeros when $\theta=0\;\text{or}\;\pi$ and two distinct non-real
zeros otherwise.
\end{proof}

The following dimension formula on the boundaries of self-similar tiles has been investigated in the literature by various methods (see \cite{[DKV]},  \cite{[SW]}, \cite{[V]}, \cite{[HLR]}, \cite{[LN]}). We shall apply this formula to obtain our second dimensional result which is simpler than the known one.

\begin{Prop}\label{thm:dimformula}
If $A$ is a similarity with $|\det(A)|=|q|\geq 2$,  then the
Hausdorff dimension of $\partial T$ is given by
\begin{equation}\label{eq:hdim}
\dim_H (\partial T)=\log\rho(M)/\log r=2\log\rho(M)/\log |q|,
\end{equation}
where $\rho(M)$ denotes the spectral radius of the contact matrix
$M$ and $r=|q|^{1/2}$ is the expansion ratio of $A$.
\end{Prop}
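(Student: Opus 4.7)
The plan is to derive this essentially as a specialization of Theorem \ref{thm:OSC}. When $A$ is a similarity with $|\det A|=|q|$, its expansion ratio in the Euclidean norm is exactly $r=|q|^{1/2}$, so any pseudo-norm $\omega$ satisfying $\omega(Ax)=|q|^{1/2}\omega(x)$ is equivalent to a constant multiple of the Euclidean norm on $\RR^{2}$. Consequently the generalized Hausdorff dimension $\dim_{H}^{\omega}$ agrees with the classical Hausdorff dimension $\dim_{H}$ on bounded sets, and the generalized Hausdorff measure transfers to an ordinary Hausdorff measure that is also positive and finite. Plugging this identification into the formula $\dim_{H}^{\omega}(\partial T)=2\log\rho(M)/\log|q|$ of Theorem \ref{thm:OSC} immediately yields (\ref{eq:hdim}).

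Alternatively, one can give a direct Mauldin--Williams argument without appealing to the pseudo-norm machinery. By Proposition \ref{prop:union} the boundary $\partial T$ decomposes as the graph-directed attractor of the GIFS $\{F_{e}: e\in\mathcal{E}\}$, where $F_{e}(x)=A^{-1}(x+d_{e})$. Since $A$ is a similarity with ratio $r=|q|^{1/2}$, every $F_{e}$ is a similarity of $\RR^{2}$ with common contraction ratio $1/r$. The OSC for this GIFS was already verified in Theorem \ref{thm:OSC} with the family of open sets $\{(T+\ell)^{\circ}\}_{\ell\in\mathcal{V}}$, and this verification is valid in the ordinary Euclidean metric. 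The Mauldin--Williams theorem then states that the Hausdorff dimension $s$ of the graph-directed set is the unique $s\geq 0$ for which the matrix with entries $M_{ij}\,(1/r)^{s}$ has spectral radius $1$. Because the contraction ratio is uniform this matrix is simply $r^{-s}M$, so the equation reduces to $r^{-s}\rho(M)=1$, giving $s=\log\rho(M)/\log r=2\log\rho(M)/\log|q|$ as claimed; positivity and finiteness of $\mathcal{H}^{s}(\partial T)$ follow from the same Mauldin--Williams result.

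The only point that really needs attention is not the formula itself but the legitimacy of invoking Mauldin--Williams: one must know that the underlying directed graph satisfies the mild irreducibility hypothesis and that the OSC already established for the interiors $(T+\ell)^{\circ}$ transfers cleanly to the self-similar setting. The latter is automatic since the OSC verification in Theorem \ref{thm:OSC} is geometric (using disjointness of translates in $\RR^{2}$) and not tied to $\omega$, while the former is a case-by-case inspection of the neighbor graphs in Appendix A, all of which are strongly connected on the vertex set $\mathcal{V}$ coming from disk-likeness. Given these two observations the proof reduces to the elementary spectral computation above.
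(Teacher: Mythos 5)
The paper does not actually prove this Proposition: it is presented as a known formula, with the reader referred to the literature ([DKV], [SW], [V], [HLR], [LN]), and is then \emph{applied} in Theorem \ref{thm:hsdf-dim}. So both of your arguments are genuinely different from what the paper does, in the sense that you supply a self-contained derivation where the paper supplies a citation. Your first route (specializing Theorem \ref{thm:OSC} by observing that for a similarity the He--Lau pseudo-norm is comparable to the Euclidean norm, so $\dim_H^{\omega}=\dim_H$) is the more economical one and fits the architecture of this paper well, though as literally stated the claim that \emph{any} $\omega$ with $\omega(Ax)=|q|^{1/2}\omega(x)$ is equivalent to the Euclidean norm needs the regularity built into the actual He--Lau construction (boundedness above and below on the unit sphere); for a diagonalizable matrix with all eigenvalues of modulus $|q|^{1/2}$ this is indeed what He--Lau prove. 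Your second route via Mauldin--Williams with the common ratio $1/r$ is the standard argument and is essentially what the cited references do; the spectral computation $r^{-s}\rho(M)=1$ is correct.

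One concrete inaccuracy: you assert that all the neighbor graphs in Appendix A are strongly connected, hence the contact matrices irreducible. The paper explicitly states the opposite in the proof of Theorem \ref{thm:hsdf-dim} --- the contact matrix is irreducible if and only if $f(x)=x^{2}\pm px+q$ with $p>0$, and is reducible for $f(x)=x^{2}\pm q$. This does not damage the dimension formula (the Mauldin--Williams dimension of a reducible system is governed by the maximal spectral radius over strongly connected components, which equals $\rho(M)$), but it does mean that positivity and finiteness of $\mathcal{H}^{s}(\partial T)$ --- which you claim in passing, and which the Proposition does not assert --- cannot be obtained by a blanket appeal to the irreducible case. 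You should either drop that claim or restrict it to the irreducible cases.
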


Let $\ell,\ell',b_{1}$ and $B_{\ell}$ be defined as in Proposition~\ref{prop:union}. We first find the contact matrix $M$. Since ${\mathcal D}$ is a CC digit set, we have the entry $M_{\ell\ell'}=\#I_{b_{1}}=q-|b_{1}|$  where $I_{b_{1}}$ is as in Proposition \ref{prop:union}. Recall that $b_{1}$ is the  label of the edge from $\ell$ to $\ell'$. Hence we obtain the contact matrix $M$ of $T$ from its neighbor graph with different edge labels (i.e., replace $b_1$ by $q-|b_1|$).

Moreover, it is easy to see that there is a one-to-one correspondence between the contact matrix and the neighbor graph.  For example, the contact matrix for the case $f(x)=x^{2}+px+q\;(p,q\geq 2,\;2p\leq q+2\;\text{excluding}\;p=q=2)$ can be found in Table \ref{tab:x^2+px+q1}, and the related neighbor
graph is shown by  Figure \ref{fig:0g}. The contact matrices for the other cases are given in Appendix C.

\begin{table}
\centering{\small
\begin{tabular}{|c|c|c|c|c|c|c|c|c|}
\hline
&$v$&$Av+(p-1)v$&$Av+pv$&$-v$&$-Av-(p-1)v$&$-Av-pv$\\
\hline
$v$&$0$&$q-p+1$&$q-p$&$0$&$0$&$0$\\
$Av+(p-1)v$&$0$&$0$&$0$&$0$&$p-1$&$p$\\
$Av+pv$&$0$&$0$&$0$&$1$&$0$&$0$\\
$-v$&$0$&$0$&$0$&$0$&$q-p+1$&$q-p$\\
$-Av-(p-1)v$&$0$&$p-1$&$p$&$0$&$0$&$0$\\
$-Av-pv$&$1$&$0$&$0$&$0$&$0$&$0$\\
\hline
\end{tabular}}
\vspace{0.2cm} \caption{The contact matrix of $T$ associated with
$f(x)=x^{2}+px+q,\;p,q\geq 2,2p\leq q+2$\;(excluding\;$p=q=2$).}
\label{tab:x^2+px+q1}
\end{table}

If $M$ is irreducible (i.e., for each entry $M_{ij}$, there exists
an integer $n\geq 0$ such that $(M^n)_{ij}>0$), then the spectral
radius  $\rho(M)=\lambda_{M}$ where $\lambda_{M}$ is the {\it
Perron-Frobenius eigenvalue} of $M$ as stated in the following
simplified version of the Perron-Frobenius Theorem.

\begin{theorem}(\cite{[Ga]}, \cite{[Se]}) \label{thm:pf}Let $M$ be an irreducible non-negative matrix.  Then there exists a positive eigenvalue
$\lambda_{M}$ such that $\lambda_{M}\geq|\mu|$ for all eigenvalues
$\mu$ of $M$.  Moreover, $\lambda_{M}$ is a simple zero of the
characteristic polynomial of $M$.
\end{theorem}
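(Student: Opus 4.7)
The plan is to follow the classical variational-plus-fixed-point route to the Perron-Frobenius theorem in four stages: existence of a nonnegative eigenvector, strict positivity, maximality of modulus, and algebraic simplicity.

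For existence, I would work on the standard simplex $\Delta=\{x\in\RR^{n}:x_{i}\geq 0,\ \sum_{i}x_{i}=1\}$ and apply Brouwer's fixed-point theorem to the continuous self-map $x\mapsto Mx/\|Mx\|_{1}$. Irreducibility of $M\geq 0$ prevents $Mx$ from vanishing on $\Delta$ (otherwise the support of $x$ would span a proper $M$-invariant coordinate subspace), so the map is well-defined; any fixed point $x_{0}$ satisfies $Mx_{0}=\lambda_{M}x_{0}$ with $\lambda_{M}:=\|Mx_{0}\|_{1}>0$.

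To promote $x_{0}\geq 0$ to $x_{0}>0$ componentwise, I would invoke the standard reformulation of irreducibility: $M\geq 0$ is irreducible if and only if $(I+M)^{n-1}$ has strictly positive entries (its $(i,j)$-entry counts weighted walks of length at most $n-1$ in the directed graph of $M$, which must exist by irreducibility). Then $(1+\lambda_{M})^{n-1}x_{0}=(I+M)^{n-1}x_{0}$ is strictly positive, forcing $x_{0}>0$. Applying the same two steps to $M^{T}$ (also irreducible and nonnegative) produces a strictly positive left eigenvector $w>0$ with $w^{T}M=\lambda_{M}w^{T}$; that the two eigenvalues agree is seen from $\lambda_{M}w^{T}x_{0}=w^{T}Mx_{0}=\lambda_{M}' w^{T}x_{0}$ together with $w^{T}x_{0}>0$.

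For maximality, given any complex eigenvalue $\mu$ with $My=\mu y$, the componentwise inequality $M|y|\geq|My|=|\mu|\,|y|$ paired with $w$ gives
\begin{equation*}
\lambda_{M}\,w^{T}|y|\;=\;w^{T}M|y|\;\geq\;|\mu|\,w^{T}|y|,
\end{equation*}
and $w^{T}|y|>0$ yields $|\mu|\leq\lambda_{M}$.

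The final and most delicate step is algebraic simplicity. First I would show the geometric eigenspace of $\lambda_{M}$ is one-dimensional: for any real $v$ with $Mv=\lambda_{M}v$, the largest $t\geq 0$ making $x_{0}-tv\geq 0$ produces a nonnegative eigenvector with at least one zero coordinate, which by the strict-positivity argument above must actually be the zero vector, so $v$ is a scalar multiple of $x_{0}$. Then I would rule out a nontrivial Jordan block: if $Mz=\lambda_{M}z+x_{0}$, left-pairing with $w$ yields $\lambda_{M}w^{T}z=w^{T}Mz=\lambda_{M}w^{T}z+w^{T}x_{0}$, hence $w^{T}x_{0}=0$, contradicting $w,x_{0}>0$. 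The hard part will be executing this last step cleanly, because $M$ is only irreducible (not necessarily primitive) and other peripheral eigenvalues of modulus $\lambda_{M}$ may occur; the $w$-pairing argument, however, isolates the algebraic multiplicity of $\lambda_{M}$ itself and delivers simplicity as a zero of the characteristic polynomial.
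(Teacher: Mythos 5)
The paper does not actually prove this statement: it is quoted as a classical theorem with citations to Gantmacher and Seneta, so there is no internal proof to compare against. Judged on its own, your outline is a correct and essentially complete route to the result --- it is the standard Brouwer-fixed-point proof of Perron--Frobenius, whereas the cited references follow Wielandt's variational argument, maximizing the Collatz--Wielandt function $r(x)=\min_{i:\,x_i>0}(Mx)_i/x_i$ over the simplex, which avoids algebraic topology at the cost of a more delicate extremal analysis. All four of your stages are sound: the well-definedness of $x\mapsto Mx/\|Mx\|_1$ under irreducibility, the $(I+M)^{n-1}>0$ characterization used to upgrade nonnegativity of the eigenvector to strict positivity, the left-eigenvector pairing that gives $|\mu|\leq\lambda_M$, and the two-part simplicity argument (geometric multiplicity one, then exclusion of a nontrivial Jordan block via $w^{T}x_0>0$) are exactly how this is done. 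One small point to tidy up in the simplicity step: the ``largest $t\geq 0$ with $x_0-tv\geq 0$'' exists only if $v$ has at least one strictly positive entry, so you should first replace $v$ by $-v$ if necessary (and, for a non-real eigenvector of $\lambda_M$, pass to its real and imaginary parts, each of which is a real eigenvector since $M$ and $\lambda_M$ are real). With that normalization the extremal $t^{*}$ is positive and finite because $x_0>0$, and the argument closes as you describe.
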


It is known that a contact matrix is irreducible if and only if the
neighbor graph it represents is strongly connected. A directed graph
is called {\em strongly connected} if for any two vertices $v_i,
v_j$ there exists a path starting at $v_i$ and ending at $v_j$.

\begin{theorem}\label{thm:hsdf-dim}
Let $A\in M_{2}(\ZZ)$ be an expanding similarity with characteristic
polynomial $f(x)=x^{2}+px+q$ and $T(A,{\mathcal D})$ be a disk-like
CC tile. Then $\rho(M)$ is the largest positive zero of the cubic
polynomial
$$x^{3}-(|p|-1)x^{2}-(|q|-|p|)x-|q|.$$ Hence
$\dim_H \big(\partial T\big)=2\log \rho(M)/\log |q|$.
\end{theorem}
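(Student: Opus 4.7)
The plan is to combine Proposition \ref{thm:dimformula}, which already gives $\dim_H(\partial T) = 2\log\rho(M)/\log|q|$, with a direct identification of $\rho(M)$. Thus it suffices to show that $\rho(M)$ equals the largest positive real zero of
\[
g(x) := x^{3}-(|p|-1)x^{2}-(|q|-|p|)x-|q|.
\]
Since $A$ is an expanding similarity, Lemma \ref{lem:rotate} forces $q>0$ (so $q\geq 2$) in the scaled-rotation case, while the scaled-reflection case forces $p=0$ and $q<0$; combined with the disk-likeness inequality of Theorem \ref{disk-likeness thm}, the pair $(p,q)$ falls into the finite list of cases catalogued in Appendix C, and I would proceed by case analysis on them.

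The unifying observation that keeps the case analysis manageable is that the map $\ell\mapsto-\ell$ is an involution on $\mathcal{V}$: it sends a neighbor to a neighbor and transforms an edge $\ell\to\ell'$ of label $b$ into an edge $-\ell\to-\ell'$ of label $-b$, preserving the edge multiplicity $|q|-|b|$ recorded in the contact matrix. Ordering the vertices of $\mathcal{V}$ as $\ell_{1},\dots,\ell_{m},-\ell_{1},\dots,-\ell_{m}$ therefore makes $M$ take the block form
\[
M=\begin{pmatrix}M_{11}&M_{12}\\ M_{12}&M_{11}\end{pmatrix},
\]
which the change of basis $P=\begin{pmatrix}I&I\\ I&-I\end{pmatrix}$ block-diagonalizes to $\operatorname{diag}(M_{11}+M_{12},\,M_{11}-M_{12})$. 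Because $M_{11}+M_{12}$ is non-negative and dominates $M_{11}-M_{12}$ entrywise in absolute value, every eigenvalue of $M_{11}-M_{12}$ is bounded in modulus by $\rho(M_{11}+M_{12})$, and hence $\rho(M)=\rho(M_{11}+M_{12})$.

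What remains is to compute, for each case in Appendix C, the characteristic polynomial of the small matrix $M_{11}+M_{12}$ and verify that it equals $g(x)$. For the representative case $f(x)=x^{2}+px+q$ with $p,q\geq 2$ and $2p\leq q+2$ (Table \ref{tab:x^2+px+q1}), one reads off
\[
M_{11}+M_{12}=\begin{pmatrix}0&q-p+1&q-p\\ 0&p-1&p\\ 1&0&0\end{pmatrix},
\]
and cofactor expansion along the first column yields $\det(xI-(M_{11}+M_{12}))=x^{3}-(p-1)x^{2}-(q-p)x-q=g(x)$. The remaining cases of Appendix C are handled identically: each $3\times 3$ (or smaller) determinant collapses, after substituting $|p|$ and $|q|$, to the same cubic $g(x)$. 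As a sanity check, the scaled-reflection case $f(x)=x^{2}-|q|$ produces $g(x)=(x+1)(x^{2}-|q|)$, whose largest positive root $\sqrt{|q|}$ recovers $\dim_H(\partial T)=1$ from Proposition \ref{prop:dim//gm}. Strong connectedness of the neighbor graphs in Appendix A makes $M_{11}+M_{12}$ irreducible, so Theorem \ref{thm:pf} pins down $\rho(M_{11}+M_{12})$ as the unique largest positive real zero of $g(x)$.

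The main obstacle is the bookkeeping of the case analysis: one must run through every entry of Appendix C, identify the correct $\ell\mapsto-\ell$ pairing of vertices, and check that the characteristic polynomial of $M_{11}+M_{12}$ always reduces to $g(x)$ under $(p,q)\mapsto(|p|,|q|)$. A secondary subtlety is the exceptional case $f(x)=x^{2}\pm 2x+2$, whose neighbor structure (by Proposition \ref{prop:nbcoeff}) departs from the generic six-neighbor pattern; here one must separately confirm that the $\ell\mapsto-\ell$ symmetry still yields a block-symmetric $M$ and that the resulting characteristic polynomial still factors through $g(x)$.
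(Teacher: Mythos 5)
Your proposal is correct and reaches the conclusion by the same overall strategy as the paper (invoke Proposition~\ref{thm:dimformula}, then run through the contact matrices of Appendix~C), but the key step---explaining why $\rho(M)$ sits in the cubic factor rather than elsewhere in the spectrum of $M$---is handled by a genuinely different device. The paper computes the full characteristic polynomial of the $6\times 6$ (or $8\times 8$) matrix $M$, observes that it factors as $(x\mp 1)(x^{2}\pm px+q)$ times the cubic, and then excludes the spurious factors one at a time: $\rho(M)\neq 1$ because $\dim_H(\partial T)=0$ would force $\partial T$ to be totally disconnected, and the quadratic factor is ruled out via Lemma~\ref{lem:rotate} together with the simplicity of the Perron--Frobenius eigenvalue in Theorem~\ref{thm:pf}. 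Your block-symmetrization under $\ell\mapsto-\ell$ makes all of that unnecessary: the conjugation by $P$ is valid (the involution does send an edge of label $b$ to one of label $-b$ with the same multiplicity $|q|-|b|$), the spectrum of $M$ splits as that of $M_{11}+M_{12}$ and $M_{11}-M_{12}$, and the entrywise domination $|M_{11}-M_{12}|\leq M_{11}+M_{12}$ forces $\rho(M)=\rho(M_{11}+M_{12})$, which for a non-negative matrix is automatically its largest positive real eigenvalue. This is cleaner and explains \emph{structurally} why the extra factors $(x\mp1)(x^{2}\pm px+q)$ never carry the spectral radius, whereas the paper's exclusion is somewhat ad hoc. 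Two small repairs are needed in your write-up: for $f(x)=x^{2}\pm q$ there are eight neighbors, so $M_{11}+M_{12}$ is $4\times 4$ and its characteristic polynomial is $(x^{2}-|q|)(x^{2}-1)$, a quartic rather than the cubic $g(x)$ (the largest positive root $\sqrt{|q|}$ still agrees with that of $g$, so the theorem is unaffected, but the claim that every block ``collapses to the same cubic'' is literally false there); and irreducibility of $M_{11}+M_{12}$ fails precisely in those two reducible cases, so you should drop the appeal to Theorem~\ref{thm:pf} and instead use only the fact that the spectral radius of any non-negative matrix is itself an eigenvalue, which is all your argument actually requires.
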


\begin{proof} Since $|\det(A)|=|q|$,  it is more convenient to work with
$f(x)=x^{2}\pm px \pm q\;(p\geq 0,\;q\geq 2)$. Also we ignore those
$f(x)$ of the form $f(x)=x^{2}\pm px - q\;(p>0,\; q\geq 2)$ as they
cannot be characteristic polynomials of similarities
(Lemma~\ref{lem:rotate}).  We can see from Appendix A or C  that the  contact matrix is irreducible if and only if $f(x)=x^2 \pm px+q$ where $p> 0$.

Case (1) $f(x)=x^{2}+px+q$. The characteristic polynomial of the
corresponding $M$ is $(x-1)(x^{2}+px+q)[x^{3}-(p-1)x^{2}-(q-p)x-q]$.
Notice that $\rho(M)\neq 1$. Indeed, if $\rho(M)=1$, then $\dim_{H}(\partial T)=0$, which implies $\partial T$ is totally disconnected (Proposition 2.5, \cite{[Fa]}). This is not possible for the boundary of a topological disk.  The zeros of $x^{2}+px+q$ are either both negative or both non-real. Hence $\rho(M)$ is the
largest positive real zero of $x^{3}-(p-1)x^{2}-(q-p)x-q$.

Case (2) $f(x)=x^{2}-px+q$.  The characteristic polynomial of the
corresponding $M$ is $(x+1)(x^{2}-px+q)[x^{3}-(p-1)x^{2}-(q-p)x-q]$.
Since $f(x)$ cannot have unequal real zeros (Lemma \ref{lem:rotate}), we have $p^{2}-4q\leq 0$. When $p^{2}-4q<0$, the zeros of $x^{2}-px+q$ are
non-real. Then $\rho(M)$ is the largest positive real zero of
$x^{3}-(p-1)x^{2}-(q-p)x-q$.  When $p^{2}-4q=0$, the two zeros of
$x^{2}-px+q$ are equal. But the Perron-Frobenius eigenvalue should
be a simple zero of the characteristic polynomial of $M$
(Theorem~\ref{thm:pf}), so $\rho(M)$ is also the largest positive
real zero of $x^{3}-(p-1)x^{2}-(q-p)x-q$.

Case (3) $f(x)=x^{2}+q$. The contact matrix $M$ is reducible. Its
characteristic polynomial is
$(x^{2}-q)(x^{2}+q)(x-1)(x+1)(x^{2}+1)$. We see that
$\rho(M)=q^{1/2}$, which is the largest positive zero of
$x^{3}+x^{2}-qx-q=(x^{2}-q)(x+1)$.

Case (4) $f(x)=x^{2}-q$.  The contact matrix $M$ is also reducible
and its characteristic polynomial is found to be
$(x^{2}-q)^{2}(x+1)(x-1)^{3}$. As in the previous case,
$\rho(M)=q^{1/2}$,  which is also the largest positive zero of
$x^{3}+x^{2}-qx-q$.
\end{proof}

\begin{Rem}
It is interesting to see that the signs of $p$ and $q$ do not matter
in the calculation of $\dim_H (\partial T)$ when $A$ is a
similarity. Notice also for the last two cases, $f(x)=x^{2}+
q\;(|q|\geq 2)$, we have $\rho(M)=|q|^{1/2}$.  It follows that
$\dim_H (\partial T)=1$, as expected for the boundary of a
parallelogram (Proposition~\ref{prop:dim//gm}).
\end{Rem}

\medskip

We observe that $\dim_H (\partial T)$ is independent of the choice
of the vector $v$ in the following sense.

\begin{Cor}\label{coro:indep-of-v}
Let $A\in M_{2}(\ZZ)$ be an expanding similarity with characteristic
polynomial $f(x)=x^{2}+px+q\;(|q|\geq 2)$. Let ${\mathcal
D}={\mathcal D}(v,|q|)$ and ${\mathcal D}'={\mathcal D}(v',|q|)$  be
two CC digit sets such that each of $\{v, Av\}$ and $\{v', Av'\}$ is
an independent set. If $2|p|\leq |q+2|$, then
\begin{equation*}
\dim_H \big(\partial T(A,{\mathcal D})\big)=\dim_H \big(\partial
T(A,{\mathcal D}')\big).
\end{equation*}
\end{Cor}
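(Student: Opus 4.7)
The plan is to deduce the corollary as an immediate consequence of Theorem \ref{thm:hsdf-dim}. First I would check that both $T(A,\mathcal{D})$ and $T(A,\mathcal{D}')$ fall under the hypotheses of that theorem. The assumption that $\{v,Av\}$ (respectively $\{v',Av'\}$) is linearly independent ensures that each of the generated self-affine sets is genuinely a tile (it has non-void interior), and the hypothesis $2|p|\leq |q+2|$ together with Theorem \ref{disk-likeness thm} then guarantees that both tiles are disk-like CC tiles. So Theorem \ref{thm:hsdf-dim} applies to each.

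Applying the theorem to each tile separately yields
\[
\dim_H\bigl(\partial T(A,\mathcal{D})\bigr)=\frac{2\log\rho}{\log|q|}
\quad\text{and}\quad
\dim_H\bigl(\partial T(A,\mathcal{D}')\bigr)=\frac{2\log\rho'}{\log|q|},
\]
where $\rho$ and $\rho'$ are the largest positive zeros of
$x^{3}-(|p|-1)x^{2}-(|q|-|p|)x-|q|$
in the two respective cases. The crucial point is that this cubic polynomial depends only on $|p|$ and $|q|$, i.e., only on the characteristic polynomial of the matrix $A$, and not on the direction vector chosen for the digit set. Consequently $\rho=\rho'$ and the two dimensions coincide.

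There is really no obstacle to overcome, since Theorem \ref{thm:hsdf-dim} already absorbs all of the work. The only conceptually delicate point worth pausing on is that the contact matrix $M$ itself, as well as the underlying neighbor graph, can a priori depend on $v$ through the way the neighbors are labeled as integer combinations of $v$ and $Av$; but the proof of Theorem \ref{thm:hsdf-dim} shows that the characteristic polynomial of $M$ factors in such a way that its Perron--Frobenius eigenvalue is always the largest positive root of the same cubic depending only on $p$ and $q$. This uniformity in $v$ is precisely what makes the corollary work, and no additional argument beyond citing Theorem \ref{thm:hsdf-dim} for each tile is needed.
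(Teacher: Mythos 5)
Your proposal is correct and matches the paper's own argument: the paper likewise invokes Theorem \ref{disk-likeness thm} to conclude both tiles are disk-like CC tiles and then cites Theorem \ref{thm:hsdf-dim}, whose cubic depends only on $|p|$ and $|q|$ and hence not on the choice of $v$. Your extra remark about the contact matrix a priori depending on $v$ is a sensible clarification but introduces nothing beyond what the theorem already provides.
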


\begin{proof}
As $2|p|\leq |q+2|$, both $T(A,{\mathcal D})$ and $T(A,{\mathcal
D}')$ are disk-like CC tiles (Theorem \ref{disk-likeness thm}).
Hence the corollary follows from Theorem \ref{thm:hsdf-dim}.
\end{proof}

\begin{Rem}
We conjecture that Theorem \ref{thm:hsdf-dim} and Corollary
\ref{coro:indep-of-v} are also valid when $2|p|>|q+2|$, i.e.,  $T$
is non-disk-like.  The major difficulty in justifying these
conjectures is that, in general, there is no upper bound on the
number of neighbors of a non-disk-like CC tile \cite{[DJN]}.
\end{Rem}

\end{section}

\bigskip

\noindent {\it Acknowledgements:}  The authors would like to thank Professor Ka-Sing Lau for suggesting the question and reading an earlier version of the manuscript carefully. They are also grateful to the anonymous referees for their valuable comments and suggestions.

\bigskip


\begin{section}{\bf Appendix A: Neighbor Graphs}

Let $f(x)=x^{2}\pm px \pm q\;(p\geq 0,\;q\geq 2)$. The neighbor
graphs of disk-like tiles are classified by $f(x)$ and listed below.

\begin{figure}[htbp]
  \[
  \begin{xy}
    0;<1cm,0cm>: (0,0)*=<2cm,1cm>{v}*\frm{-}="a"
    ,(0,-2)*=<2cm,1cm>{Av-v}*\frm{-}="b"
    ,(0,-3.5)*=<2cm,1cm>{Av}*\frm{-}="c"
    ,(0,-5)*=<2cm,1cm>{Av+v}*\frm{-}="x"
    ,(6,0)*=<2cm,1cm>{-v}*\frm{-}="d"
    ,(6,-2)*=<2cm,1cm>{-Av+v}*\frm{-}="e"
    ,(6,-3.5)*=<2cm,1cm>{-Av}*\frm{-}="f"
    ,(6,-5)*=<2cm,1cm>{-Av-v}*\frm{-}="y"
    ,"a"+(0,-0.5)
    ;"b"+(0,0.5)**\dir{-}?*^!/10pt/{1}?>*\dir{>}
    ,"b"+(1,0)
    ;"y"+(-1,0)**\crv{(3,-4)&(3,-6)}?(0.3)*^!/20pt/{-(q-1)}?>*\dir{>}
    ,"y"+(1,0.5)
    ;"e"+(1,-0.5)**\crv{(8,-5)}?(0.7)*^!/25pt/{q-1}?>*\dir{>}
    ,"e"+(-1,0)
    ;"x"+(1,0)**\crv{(3,-4)&(3,-6)}?(0.3)*_!/20pt/{q-1}?>*\dir{>}
    ,"x"+(-1,0.5)
    ;"b"+(-1,-0.5)**\crv{(-2,-5)}?(0.7)*_!/25pt/{-(q-1)}?>*\dir{>}
    ,"a"+(-1,-0.5)
    ;"c"+(-1,0)**\crv{(-2,-3)}?*^!/10pt/{0}?>*\dir{>}
    ,"c"+(1,0)
    ;"d"+(-0.7,-0.5)**\crv{(4,-4)}?(0.9)*_!/25pt/{-(q-1)}?>*\dir{>}
    ,"d"+(0,-0.5)
    ;"e"+(0,0.5)**\dir{-}?*^!/10pt/{-1}?>*\dir{>}
    ,"f"+(-1,0)
    ;"a"+(0.7,-0.5)**\crv{(2,-4)}?(0.9)*^!/25pt/{q-1}?>*\dir{>}
    ,"d"+(1,-0.5)
    ;"f"+(1,0)**\crv{(8,-3)}?*_!/10pt/{0}?>*\dir{>}
    ,"a"+(-1,0)
    ;"x"+(-1,0)**\crv{(-6,-6)}?*^!/10pt/{-1}?>*\dir{>}
    ,"d"+(1,0)
    ;"y"+(1,0)**\crv{(12,-6)}?*_!/10pt/{1}?>*\dir{>}
  \end{xy}
  \]
  \caption{The neighbor graph of $T$ associated with $f(x)=x^2+q$.}
  \label{fig:1g}
\end{figure}

\medskip

\begin{figure}[htbp]
  \[
  \begin{xy}
    0;<1cm,0cm>: (0,0)*=<2cm,1cm>{v}*\frm{-}="a"
    ,(0,-2)*=<2cm,1cm>{Av-v}*\frm{-}="b"
    ,(0,-3.5)*=<2cm,1cm>{Av}*\frm{-}="c"
    ,(0,-5)*=<2cm,1cm>{Av+v}*\frm{-}="x"
    ,(4,0)*=<2cm,1cm>{-v}*\frm{-}="d"
    ,(4,-2)*=<2cm,1cm>{-Av+v}*\frm{-}="e"
    ,(4,-3.5)*=<2cm,1cm>{-Av}*\frm{-}="f"
    ,(4,-5)*=<2cm,1cm>{-Av-v}*\frm{-}="y"
    ,"a"+(0,-0.5)
    ;"b"+(0,0.5)**\dir{-}?*^!/10pt/{1}?>*\dir{>}
    ,"a"+(-1,0)
    ;"c"+(-1,0)**\crv{(-5,-4)}?(0.2)*^!/8pt/{0}?>*\dir{>}
    ,"c"+(-1,0.5)
    ;"a"+(-1,-0.5)**\crv{(-1.5,-3)}?(0.4)*_!/25pt/{q-1}?>*\dir{>}
    ,"a"+(-1,0.5)
    ;"x"+(-1,0)**\crv{(-6,-2)}?(0.2)*^!/10pt/{-1}?>*\dir{>}
    ,"b"+(1,0.3)
    ;"e"+(-1,0.3)**\dir{-}?*_!/10pt/{q-1}?>*\dir{>}
    ,"e"+(-1,-0.3)
    ;"b"+(1,-0.3)**\dir{-}?*_!/10pt/{-(q-1)}?>*\dir{>}
    ,"d"+(0,-0.5)
    ;"e"+(0,0.5)**\dir{-}?*_!/10pt/{-1}?>*\dir{>}
    ,"d"+(1,0)
    ;"f"+(1,0)**\crv{(9,-4)}?(0.2)*_!/8pt/{0}?>*\dir{>}
    ,"f"+(1,0.5)
    ;"d"+(1,-0.5)**\crv{(5.5,-3)}?(0.4)*^!/25pt/{-(q-1)}?>*\dir{>}
    ,"d"+(1,0.5)
    ;"y"+(1,0)**\crv{(10,-2)}?(0.2)*_!/10pt/{1}?>*\dir{>}
    ,"x"+(-0.5,-0.5)
    ;"x"+(0.5,-0.5)**\crv{(-1,-6)&(1,-6)}?*^!/10pt/{q-1}?>*\dir{>}
    ,"y"+(0.5,-0.5)
    ;"y"+(-0.5,-0.5)**\crv{(5,-6)&(3,-6)}?*_!/10pt/{-(q-1)}?>*\dir{>}
  \end{xy}
  \]
  \caption{The neighbor graph of $T$ associated with $f(x)=x^2-q$.}
  \label{fig:2g}
\end{figure}

\clearpage

\begin{figure}[htbp]
  \[
  \begin{xy}
    0;<1cm,0cm>: (0,0)*=<2cm,1cm>{v}*\frm{-}="a"
    ,(0,-2)*=<3cm,1cm>{Av}*\frm{-}="b"
    ,(0,-4)*=<2cm,1cm>{Av+v}*\frm{-}="c"
    ,(6,0)*=<2cm,1cm>{-v}*\frm{-}="d"
    ,(6,-2)*=<3cm,1cm>{-Av}*\frm{-}="e"
    ,(6,-4)*=<2cm,1cm>{-Av-v}*\frm{-}="f"
    ,"a"+(0,-0.5)
    ;"b"+(0,0.5)**\dir{-}?*^!/10pt/{0}?>*\dir{>}
    ,"b"+(1.5,0)
    ;"f"+(-1,0.5)**\dir{-}?(0.7)*^!/15pt/{-(q-1)}?>*\dir{>}
    ,"c"+(1,-0.5)
    ;"d"+(1,0.3)**\crv{"f"+(0,-2)&"f"+(4,0)&"d"+(4,0)}?(0.3)*^!/15pt/{-(q-1)}?>*\dir{>}
    ,"a"+(-1,0)
    ;"c"+(-1,0)**\crv{(-4,-3)}?*^!/10pt/{-1}?>*\dir{>}
    ,"d"+(0,-0.5)
    ;"e"+(0,0.5)**\dir{-}?*_!/10pt/{0}?>*\dir{>}
    ,"e"+(-1.5,0)
    ;"c"+(1,0.5)**\dir{-}?(0.8)*_!/10pt/{q-1}?>*\dir{>}
    ,"f"+(-1,-0.5)
    ;"a"+(-1,0.3)**\crv{"c"+(0,-2)&"c"+(-4,0)&"a"+(-4,0)}?(0.3)*_!/15pt/{q-1}?>*\dir{>}
    ,"d"+(1,0)
    ;"f"+(1,0)**\crv{(10,-3)}?*_!/10pt/{1}?>*\dir{>}
  \end{xy}
  \]
  \caption{The neighbor graph of $T$ associated with $f(x)=x^2+x+q$.}
  \label{fig:3g}
\end{figure}

\medskip

\begin{figure}[htbp]
  \[
  \begin{xy}
    0;<1cm,0cm>: (0,0)*=<2cm,1cm>{v}*\frm{-}="a"
    ,(0,-2)*=<3cm,1cm>{Av}*\frm{-}="b"
    ,(0,-4)*=<2cm,1cm>{Av-v}*\frm{-}="c"
    ,(6,0)*=<2cm,1cm>{-v}*\frm{-}="d"
    ,(6,-2)*=<3cm,1cm>{-Av}*\frm{-}="e"
    ,(6,-4)*=<2cm,1cm>{-Av+v}*\frm{-}="f"
    ,"a"+(0,-0.5)
    ;"b"+(0,0.5)**\dir{-}?*^!/10pt/{0}?>*\dir{>}
    ,"b"+(0,-0.5)
    ;"c"+(0,0.5)**\dir{-}?*^!/25pt/{-(q-1)}?>*\dir{>}
    ,"c"+(1,0.5)
    ;"d"+(-1,-0.5)**\dir{-}?(0.2)*^!/15pt/{-(q-1)}?>*\dir{>}
    ,"a"+(-1,0)
    ;"c"+(-1,0)**\crv{(-4,-3)}?*^!/10pt/{-1}?>*\dir{>}
    ,"d"+(0,-0.5)
    ;"e"+(0,0.5)**\dir{-}?*_!/10pt/{0}?>*\dir{>}
    ,"e"+(0,-0.5)
    ;"f"+(0,0.5)**\dir{-}?*_!/25pt/{q-1}?>*\dir{>}
    ,"f"+(-1,0.5)
    ;"a"+(1,-0.5)**\dir{-}?(0.2)*_!/15pt/{q-1}?>*\dir{>}
    ,"d"+(1,0)
    ;"f"+(1,0)**\crv{(10,-3)}?*_!/10pt/{1}?>*\dir{>}
  \end{xy}
  \]
  \caption{The neighbor graph of $T$ associated with $f(x)=x^2-x+q$.}
  \label{fig:4g}
\end{figure}

\medskip

\begin{figure}[htbp]
  \[
  \begin{xy}
    0;<1cm,0cm>: (0,0)*=<2cm,1cm>{v}*\frm{-}="a"
    ,(0,-3)*=<3cm,1cm>{Av+(p-1)v}*\frm{-}="b"
    ,(0,-6)*=<2cm,1cm>{Av+pv}*\frm{-}="c"
    ,(6,0)*=<2cm,1cm>{-v}*\frm{-}="d"
    ,(6,-3)*=<3cm,1cm>{-Av-(p-1)v}*\frm{-}="e"
    ,(6,-6)*=<2cm,1cm>{-Av-pv}*\frm{-}="f"
    ,"a"+(0,-0.5)
    ;"b"+(0,0.5)**\dir{-}?*^!/25pt/{-(p-1)}?>*\dir{>}
    ,"b"+(1.5,-0.5)
    ;"f"+(-1,0.5)**\dir{-}?(0.1)*^!/20pt/{-(q-p)}?>*\dir{>}
    ,"e"+(-1.5,-0.5)
    ;"c"+(1,0.5)**\dir{-}?(0.1)*_!/20pt/{q-p}?>*\dir{>}
    ,"b"+(1.5,0.3)
    ;"e"+(-1.5,0.3)**\dir{-}?*_!/10pt/{-(q-p+1)}?>*\dir{>}
    ,"e"+(-1.5,-0.3)
    ;"b"+(1.5,-0.3)**\dir{-}?*_!/10pt/{q-p+1}?>*\dir{>}
    ,"a"+(-1,0)
    ;"c"+(-1,0)**\crv{(-4,-3)}?*^!/10pt/{-p}?>*\dir{>}
    ,"d"+(0,-0.5)
    ;"e"+(0,0.5)**\dir{-}?*_!/25pt/{p-1}?>*\dir{>}
    ,"d"+(1,0)
    ;"f"+(1,0)**\crv{(10,-3)}?*_!/10pt/{p}?>*\dir{>}
    ,"c"+(1,-0.5)
    ;"d"+(1,0.3)**\crv{"f"+(0,-2)&"f"+(4,0)&"d"+(4,0)}?(0.3)*^!/15pt/{-(q-1)}?>*\dir{>}
    ,"f"+(-1,-0.5)
    ;"a"+(-1,0.3)**\crv{"c"+(0,-2)&"c"+(-4,0)&"a"+(-4,0)}?(0.3)*_!/15pt/{q-1}?>*\dir{>}
  \end{xy}
  \]
  \caption{The neighbor graph of $T$ associated with $f(x)=x^2+px+q$, $p\geq 2$, $2p\leq q+2$\;
  (excluding $q=p=2$).}
  \label{fig:5g}
\end{figure}

\medskip

\begin{figure}[htbp]
  \[
  \begin{xy}
    0;<1cm,0cm>: (0,0)*=<2cm,1cm>{v}*\frm{-}="a"
    ,(0,-3)*=<3cm,1cm>{Av-(p-1)v}*\frm{-}="b"
    ,(0,-6)*=<2cm,1cm>{Av-pv}*\frm{-}="c"
    ,(6,0)*=<2cm,1cm>{-v}*\frm{-}="d"
    ,(6,-3)*=<3cm,1cm>{-Av+(p-1)v}*\frm{-}="e"
    ,(6,-6)*=<2cm,1cm>{-Av+pv}*\frm{-}="f"
    ,"a"+(0,-0.5)
    ;"b"+(0,0.5)**\dir{-}?*^!/25pt/{p-1}?>*\dir{>}
    ,"b"+(1.5,-0.5)
    ;"b"+(1.5,0.5)**\crv{(3,-6)&(3,0)}?(0.8)*^!/10pt/{-(q-p+1)}?>*\dir{>}
    ,"b"+(0,-0.5)
    ;"c"+(0,0.5)**\dir{-}?*^!/25pt/{-(q-p)}?>*\dir{>}
    ,"a"+(-1,0)
    ;"c"+(-1,0)**\crv{(-4,-3)}?*^!/10pt/{p}?>*\dir{>}
    ,"d"+(0,-0.5)
    ;"e"+(0,0.5)**\dir{-}?*_!/25pt/{-(p-1)}?>*\dir{>}
    ,"e"+(-1.5,-0.5)
    ;"e"+(-1.5,0.5)**\crv{(3,-6)&(3,0)}?(0.2)*_!/10pt/{q-p+1}?>*\dir{>}
    ,"e"+(0,-0.5)
    ;"f"+(0,0.5)**\dir{-}?*_!/25pt/{q-p}?>*\dir{>}
    ,"d"+(1,0)
    ;"f"+(1,0)**\crv{(10,-3)}?*_!/10pt/{-p}?>*\dir{>}
    ,"c"+(1,-0.5)
    ;"d"+(1,0.3)**\crv{"f"+(0,-2)&"f"+(4,0)&"d"+(4,0)}?(0.3)*^!/15pt/{-(q-1)}?>*\dir{>}
    ,"f"+(-1,-0.5)
    ;"a"+(-1,0.3)**\crv{"c"+(0,-2)&"c"+(-4,0)&"a"+(-4,0)}?(0.3)*_!/15pt/{q-1}?>*\dir{>}
  \end{xy}
  \]
  \caption{The neighbor graph of $T$ associated with $f(x)=x^2-px+q$, $p\geq 2$, $2p\leq q+2$\;
   (excluding $q=p=2$)}
  \label{fig:6g}
\end{figure}

\medskip

\begin{figure}[htbp]
  \[
  \begin{xy}
    0;<1cm,0cm>: (0,0)*=<2cm,1cm>{v}*\frm{-}="a"
    ,(0,-2)*=<3cm,1cm>{Av+pv}*\frm{-}="b"
    ,(0,-4)*=<3cm,1cm>{Av+(p+1)v}*\frm{-}="c"
    ,(6,0)*=<2cm,1cm>{-v}*\frm{-}="d"
    ,(6,-2)*=<3cm,1cm>{-Av-pv}*\frm{-}="e"
    ,(6,-4)*=<3cm,1cm>{-Av-(p+1)v}*\frm{-}="f"
    ,"a"+(-0.5,-0.5)
    ;"b"+(-0.5,0.5)**\dir{-}?*^!/10pt/{-p}?>*\dir{>}
    ,"b"+(0.5,0.5)
    ;"a"+(0.5,-0.5)**\dir{-}?*^!/25pt/{q-1}?>*\dir{>}
    ,"f"+(0,0.5)
    ;"e"+(0,-0.5)**\dir{-}?*^!/25pt/{-(q-p)}?>*\dir{>}
    ,"c"+(0,0.5)
    ;"b"+(0,-0.5)**\dir{-}?*_!/15pt/{q-p}?>*\dir{>}
    ,"a"+(-1,0)
    ;"c"+(-1.5,0)**\crv{(-4,-3)}?(0.2)*^!/25pt/{-(p+1)}?>*\dir{>}
    ,"d"+(0.5,-0.5)
    ;"e"+(0.5,0.5)**\dir{-}?*_!/10pt/{p}?>*\dir{>}
    ,"e"+(-0.5,0.5)
    ;"d"+(-0.5,-0.5)**\dir{-}?*_!/25pt/{-(q-1)}?>*\dir{>}
    ,"c"+(-0.5,-0.5)
    ;"c"+(0.5,-0.5)**\crv{(-1.5,-5.5)&(1.5,-5.5)}?*^!/10pt/{q-p-1}?>*\dir{>}
    ,"f"+(0.5,-0.5)
    ;"f"+(-0.5,-0.5)**\crv{(7.5,-5.5)&(4.5,-5.5)}?*_!/10pt/{-(q-p-1)}?>*\dir{>}
    ,"d"+(1,0)
    ;"f"+(1.5,0)**\crv{(10,-3)}?(0.2)*_!/25pt/{p+1}?>*\dir{>}
  \end{xy}
  \]
  \caption{The neighbor graph of $T$ associated with $f(x)=x^2+px-q$, $p\geq 1$, $2p\leq q-2$}
  \label{fig:7g}
\end{figure}

\medskip

\begin{figure}[htbp]
  \[
  \begin{xy}
    0;<1cm,0cm>: (0,0)*=<2cm,1cm>{v}*\frm{-}="a"
    ,(0,-3)*=<3cm,1cm>{Av-pv}*\frm{-}="b"
    ,(0,-6)*=<3cm,1cm>{Av-(p+1)v}*\frm{-}="c"
    ,(6,0)*=<2cm,1cm>{-v}*\frm{-}="d"
    ,(6,-3)*=<3cm,1cm>{-Av+pv}*\frm{-}="e"
    ,(6,-6)*=<3cm,1cm>{-Av+(p+1)v}*\frm{-}="f"
    ,"a"+(-0.5,-0.5)
    ;"b"+(-0.5,0.5)**\dir{-}?*^!/10pt/{p}?>*\dir{>}
    ,"b"+(0.5,0.5)
    ;"a"+(0.5,-0.5)**\dir{-}?*^!/25pt/{q-1}?>*\dir{>}
    ,"f"+(-1,0.5)
    ;"b"+(1.5,-0.5)**\dir{-}?(0.1)*^!/15pt/{-(q-p)}?>*\dir{>}
    ,"c"+(1,0.5)
    ;"e"+(-1.5,-0.5)**\dir{-}?(0.1)*_!/15pt/{q-p}?>*\dir{>}
    ,"a"+(-1,0)
    ;"c"+(-1.5,0)**\crv{(-4,-3)}?(0.2)*^!/25pt/{p+1}?>*\dir{>}
    ,"d"+(0.5,-0.5)
    ;"e"+(0.5,0.5)**\dir{-}?*_!/10pt/{-p}?>*\dir{>}
    ,"e"+(-0.5,0.5)
    ;"d"+(-0.5,-0.5)**\dir{-}?*_!/25pt/{-(q-1)}?>*\dir{>}
    ,"c"+(1.5,0)
    ;"f"+(-1.5,0)**\dir{-}?*_!/10pt/{q-p-1}?>*\dir{>}
    ,"f"+(-1.5,-0.3)
    ;"c"+(1.5,-0.3)**\dir{-}?*_!/10pt/{-(q-p-1)}?>*\dir{>}
    ,"d"+(1,0)
    ;"f"+(1.5,0)**\crv{(10,-3)}?(0.2)*_!/25pt/{-(p+1)}?>*\dir{>}
  \end{xy}
  \]
  \caption{The neighbor graph of $T$ associated with $f(x)=x^2-px-q$, $p\geq 1$, $2p\leq q-2$}
  \label{fig:8g}
\end{figure}

\medskip

\begin{figure}[htbp]
  \[
  \begin{xy}
    0;<1cm,0cm>: (0,0)*=<2cm,1cm>{v}*\frm{-}="a"
    ,(0,-1.5)*=<2cm,1cm>{Av+v}*\frm{-}="b"
    ,(0,-3)*=<2cm,1cm>{Av+2v}*\frm{-}="c"
    ,(6,0)*=<2cm,1cm>{-v}*\frm{-}="d"
    ,(6,-1.5)*=<2cm,1cm>{-Av-v}*\frm{-}="e"
    ,(6,-3)*=<2cm,1cm>{-Av-2v}*\frm{-}="f"
    ,"a"+(0,-0.5)
    ;"b"+(0,0.5)**\dir{-}?*^!/10pt/{-1}?>*\dir{>}
    ,"b"+(1,0.3)
    ;"e"+(-1,0.3)**\dir{-}?*_!/10pt/{-1}?>*\dir{>}
    ,"b"+(0,-0.5)
    ;"f"+(-1,0)**\dir{-}?(0.3)*^!/8pt/{0}?>*\dir{>}
    ,"d"+(0,-0.5)
    ;"e"+(0,0.5)**\dir{-}?*_!/8pt/{1}?>*\dir{>}
    ,"e"+(-1,-0.3)
    ;"b"+(1,-0.3)**\dir{-}?*_!/10pt/{1}?>*\dir{>}
    ,"e"+(0,-0.5)
    ;"c"+(1,0)**\dir{-}?(0.3)*_!/8pt/{0}?>*\dir{>}
    ,"f"+(1,0)
    ;"a"+(0,0.5)**\crv{"f"+(3,0)&"d"+(2.5,0.25)&"d"+(1.5,1.75)&"a"+(0,1.75)}?*^!/10pt/{1}?>*\dir{>}
    ,"c"+(-1,0)
    ;"d"+(0,0.5)**\crv{"c"+(-3,0)&"a"+(-2.5,0.25)&"a"+(-1.5,1.75)&"d"+(0,1.75)}?*_!/10pt/{-1}?>*\dir{>}
  \end{xy}
  \]
  \caption{The neighbor graph of $T$ associated with $f(x)=x^2+2x+2$}
  \label{fig:9g}
\end{figure}

\medskip

\begin{figure}[htbp]
  \[
  \begin{xy}
    0;<1cm,0cm>: (0,0)*=<2cm,1cm>{v}*\frm{-}="a"
    ,(0,-3)*=<2cm,1cm>{Av-v}*\frm{-}="b"
    ,(0,-7)*=<2cm,1cm>{Av-2v}*\frm{-}="c"
    ,(6,0)*=<2cm,1cm>{-v}*\frm{-}="d"
    ,(6,-3)*=<2cm,1cm>{-Av+v}*\frm{-}="e"
    ,(6,-7)*=<2cm,1cm>{-Av+2v}*\frm{-}="f"
    ,"a"+(0,-0.5)
    ;"b"+(0,0.5)**\dir{-}?*^!/10pt/{1}?>*\dir{>}
    ,"b"+(-1,-0.5)
    ;"b"+(-1,0.5)**\crv{(-3,-6)&(-3,0)}?(0.8)*_!/10pt/{-1}?>*\dir{>}
    ,"b"+(0,-0.5)
    ;"c"+(0,0.5)**\dir{-}?*^!/10pt/{0}?>*\dir{>}
    ,"c"+(1,0.5)
    ;"d"+(-1,-0.5)**\dir{-}?(0.3)*_!/10pt/{-1}?>*\dir{>}
    ,"d"+(0,-0.5)
    ;"e"+(0,0.5)**\dir{-}?*_!/10pt/{-1}?>*\dir{>}
    ,"e"+(0,-0.5)
    ;"f"+(0,0.5)**\dir{-}?*_!/10pt/{0}?>*\dir{>}
    ,"e"+(1,-0.5)
    ;"e"+(1,0.5)**\crv{(9,-6)&(9,0)}?(0.8)*^!/10pt/{1}?>*\dir{>}
    ,"f"+(-1,0.5)
    ;"a"+(1,-0.5)**\dir{-}?(0.3)*^!/10pt/{1}?>*\dir{>}
  \end{xy}
  \]
  \caption{The neighbor graph of $T$ associated with $f(x)=x^2-2x+2$}
  \label{fig:10g}
\end{figure}
\end{section}

\clearpage
\newpage

\begin{section}{\bf Appendix B: Graph-directed Sets}

Let $f(x)=x^{2}\pm px \pm q\;(p\geq 0,\;q\geq 2)$. The
graph-directed sets representing the boundary $\partial T$ are
classified by $f(x)$ and listed below.

\medskip

(1) $f(x)=x^{2}+q$. Convention:
$u_{1}=v$,\;$u_{2}=Av-v$,\;$u_{3}=Av$,\;$u_{4}=Av+v$. {\scriptsize
\begin{eqnarray*}
AT_{u_{1}} &= &
\bigcup_{j=1}^{q-1}(T_{u_{2}}+jv)\cup\bigcup_{j=0}^{q-1}(T_{u_{3}}+jv)
\cup\bigcup_{j=0}^{q-2}(T_{u_{4}}+jv)\\
AT_{u_{2}} &= &  T_{-u_{4}}\\
AT_{u_{3}} &= & T_{-u_{1}}\\
AT_{u_{4}} &= & T_{u_{2}}\\
AT_{-u_{1}} &= &
\bigcup_{j=0}^{q-2}(T_{-u_{2}}+jv)\cup\bigcup_{j=0}^{q-1}(T_{-u_{3}}+jv)
\cup\bigcup_{j=1}^{q-1}(T_{-u_{4}}+jv)\\
AT_{-u_{2}} &= & T_{u_{4}}+(q-1)v\\
AT_{-u_{3}} &= & T_{u_{1}}+(q-1)v\\
AT_{-u_{4}} &= & T_{-u_{2}}+(q-1)v
\end{eqnarray*}}

(2) $f(x)=x^{2}-q$.  Convention:
$u_{1}=v$,\;$u_{2}=Av-v$,\;$u_{3}=Av$,\;$u_{4}=Av+v$. {\scriptsize
\begin{eqnarray*}
AT_{u_{1}} &= &
\bigcup_{j=1}^{q-1}(T_{u_{2}}+jv)\cup\bigcup_{j=0}^{q-1}(T_{u_{3}}+jv)\cup\bigcup_{j=0}^{q-2}(T_{u_{4}}+jv)\\
AT_{u_{2}} &= & T_{-u_{2}}+(q-1)v\\
AT_{u_{3}} &= & T_{u_{1}}+(q-1)v\\
AT_{u_{4}} &= & T_{u_{4}}+(q-1)v\\
AT_{-u_{1}} &= &
\bigcup_{j=0}^{q-2}(T_{-u_{2}}+jv)\cup\bigcup_{j=0}^{q-1}(T_{-u_{3}}+jv)\cup\bigcup_{j=1}^{q-1}(T_{-u_{4}}+jv)\\
AT_{-u_{2}} &= & T_{u_{2}}\\
AT_{-u_{3}} &=& T_{-u_{1}}\\
AT_{-u_{4}} &= & T_{-u_{4}}\\
\end{eqnarray*}}

(3) $f(x)=x^{2}+x+q$. Convention:
$u_{1}=v$,\;$u_{2}=Av$,\;$u_{3}=Av+v$. {\scriptsize
\begin{eqnarray*}
AT_{u_{1}} &= &
\bigcup_{j=0}^{q-1}(T_{u_{2}}+jv)\cup\bigcup_{j=0}^{q-2}(T_{u_{3}}+jv)\\
AT_{u_{2}} &= & T_{-u_{3}}\\
AT_{u_{3}} &= & T_{-u_{1}}\\
AT_{-u_{1}} &= &
\bigcup_{j=0}^{q-1}(T_{-u_{2}}+jv)\cup\bigcup_{j=1}^{q-1}(T_{-u_{3}}+jv)\\
AT_{-u_{2}} &= & T_{u_{3}}+(q-1)v\\
AT_{-u_{3}} &= & T_{u_{1}}+(q-1)v\\
\end{eqnarray*}}

(4) $f(x)=x^{2}-x+q$.  Convention:
$u_{1}=v$,\;$u_{2}=Av$,\;$u_{3}=Av-v$. {\scriptsize
\begin{eqnarray*}
AT_{u_{1}} &= &
\bigcup_{j=0}^{q-1}(T_{u_{2}}+jv)\cup\bigcup_{j=0}^{q-2}(T_{u_{3}}+jv)\\
AT_{u_{2}} &= & T_{u_{3}}\\
AT_{u_{3}} &= & T_{-u_{1}}\\
AT_{-u_{1}} &= &
\bigcup_{j=0}^{q-1}(T_{-u_{2}}+jv)\cup\bigcup_{j=1}^{q-1}(T_{-u_{3}}+jv)\\
AT_{-u_{2}}  &= & T_{-u_{3}}+(q-1)v\\
AT_{-u_{3}} &= & T_{u_{1}}+(q-1)v\\
\end{eqnarray*}}

(5) $f(x)=x^{2}+px+q,\;p\geq 2$,\;$2p\leq q+2$\;(excluding $p=q=2$).
Convention:$u_{1}=v$,\;$u_{2}=Av+(p-1)v$,\;$u_{3}=Av+pv$.
{\scriptsize
\begin{eqnarray*}
AT_{u_{1}} &= &
\bigcup_{j=0}^{q-p}(T_{u_{2}}+jv)\cup\bigcup_{j=0}^{q-p-1}(T_{u_{3}}+jv)\\
AT_{u_{2}} &= &\bigcup_{j=0}^{p-2}(T_{-u_{2}}+jv)\cup\bigcup_{j=0}^{p-1}(T_{-u_{3}}+jv)\\
AT_{u_{3}} &= & T_{-u_{1}}\\
AT_{-u_{1}} &= &
\bigcup_{j=p-1}^{q-1}(T_{-u_{2}}+jv)\cup\bigcup_{j=p}^{q-1}(T_{-u_{3}}+jv)\\
AT_{-u_{2}} &= & \bigcup_{j=q-p+1}^{q-1}(T_{u_{2}}+jv)\cup\bigcup_{j=q-p}^{q-1}(T_{u_{3}}+jv)\\
AT_{-u_{3}} &= & T_{u_{1}}+(q-1)v\\
\end{eqnarray*}}

(6) $f(x)=x^{2}-px+q,\;p\geq 2$,\;$2p\leq
q+2$\;(excluding\;$p=q=2$).  Convention:
$u_{1}=v$,\;$u_{2}=Av-(p-1)v$,\;$u_{3}=Av-pv$. {\scriptsize
\begin{eqnarray*}
AT_{u_{1}} &= &
\bigcup_{j=p-1}^{q-1}(T_{u_{2}}+jv)\cup\bigcup_{j=p}^{q-1}(T_{u_{3}}+jv)\\
AT_{u_{2}} &= &\bigcup_{j=0}^{p}(T_{u_{2}}+jv)\cup\bigcup_{j=0}^{p-1}(T_{u_{3}}+jv)\\
AT_{u_{3}} &= & T_{-u_{1}}\\
AT_{-u_{1}} &= &
\bigcup_{j=0}^{q-p}(T_{-u_{2}}+jv)\cup\bigcup_{j=0}^{q-p-1}(T_{-u_{3}}+jv)\\
AT_{-u_{2}} &= & \bigcup_{j=q-p+1}^{q-1}(T_{-u_{2}}+jv)\cup\bigcup_{j=q-p}^{q-1}(T_{-u_{3}}+jv)\\
AT_{-u_{3}} &= & T_{u_{1}}+(q-1)v\\
\end{eqnarray*}}

(7) $f(x)=x^{2}+px-q,\;p\geq 1$,\;$2p\leq q-2$.  Convention:
$u_{1}=v$,\;$u_{2}=Av+pv$,\;$u_{3}=Av+(p+1)v$. {\scriptsize
\begin{eqnarray*}
AT_{u_{1}} &= &
\bigcup_{j=0}^{q-p-1}(T_{u_{2}}+jv)\cup\bigcup_{j=0}^{q-p-2}(T_{u_{3}}+jv)\\
AT_{u_{2}} &= & T_{u_{1}}+(q-1)v\\
AT_{u_{3}} &= & \bigcup_{j=q-p}^{q-1}(T_{u_{2}}+jv)\cup\bigcup_{j=q-p-1}^{q-1}(T_{u_{3}}+jv)\\
AT_{-u_{1}} & = &
\bigcup_{j=p}^{q-1}(T_{-u_{2}}+jv)\cup\bigcup_{j=p+1}^{q-1}(T_{-u_{3}}+jv)\\
AT_{-u_{2}} &= &  T_{u_{1}}\\
AT_{-u_{3}} &= &\bigcup_{j=0}^{p-1}(T_{-u_{2}}+jv)\cup\bigcup_{j=0}^{p}(T_{-u_{3}}+jv)\\
\end{eqnarray*}}

(8) $f(x)=x^{2}-px-q,\;p\geq 1$,\;$2p\leq q-2$.  Convention:
$u_{1}=v$,\;$u_{2}=Av-pv$,\;$u_{3}=Av-(p+1)v$. {\scriptsize
\begin{eqnarray*}
AT_{u_{1}} &= &
\bigcup_{j=p}^{q-1}(T_{u_{2}}+jv)\cup\bigcup_{j=p+1}^{q-1}(T_{u_{3}}+jv)\\
AT_{u_{2}} &= & T_{u_{1}}+(q-1)v\\
AT_{u_{3}} &= &\bigcup_{j=q-p}^{q-1}(T_{-u_{2}}+jv)\cup\bigcup_{j=q-p-1}^{q-1}(T_{-u_{3}}+jv)\\
AT_{-u_{1}} &= &
\bigcup_{j=0}^{q-p-1}(T_{-u_{2}}+jv)\cup\bigcup_{j=0}^{q-p-2}(T_{-u_{3}}+jv)\\
AT_{-u_{2}} &= &  T_{-u_{1}}\\
AT_{-u_{3}} &= &\bigcup_{j=0}^{p-1}(T_{u_{2}}+jv)\cup\bigcup_{j=0}^{p-2}(T_{u_{3}}+jv)\\
\end{eqnarray*}}

(9) $f(x)=x^{2}+2x+2$. Convention:
$u_{1}=v$,\;$u_{2}=Av+v$,\;$u_{3}=Av+2v$. {\scriptsize
\begin{eqnarray*}
AT_{u_{1}} &= & T_{u_{2}}\\
AT_{u_{2}} &= & T_{-u_{2}}\cup T_{-u_{3}}\cup (T_{-u_{3}}+v)\\
AT_{u_{3}} &= & T_{-u_{1}}\\
AT_{-u_{1}} &= & T_{-u_{2}}+v\\
AT_{-u_{2}} &= & (T_{u_{2}}+v)\cup T_{u_{3}}\cup (T_{u_{3}}+v)\\
AT_{-u_{3}} &= & T_{u_{1}}+v\\
\end{eqnarray*}}

 (10) $f(x)=x^{2}-2x+2$. Convention: $u_{1}=v$,\;$u_{2}=Av-v$,\;$u_{3}=Av-2v$.
{\scriptsize
\begin{eqnarray*}
AT_{u_{1}}  &= & T_{u_{2}}+v\\
AT_{u_{2}} &= & T_{u_{2}}\cup T_{u_{3}}\cup (T_{u_{3}}+v)\\
AT_{u_{3}} &= & T_{-u_{1}}\\
AT_{-u_{1}} &= & T_{-u_{2}}\\
AT_{-u_{2}} &= & (T_{-u_{2}}+v)\cup T_{-u_{3}}\cup (T_{-u_{3}}+v)\\
AT_{-u_{3}} &= & T_{u_{1}}+v\\
\end{eqnarray*}}

\end{section}

\begin{section}{\bf Appendix C: Contact Matrices}

Let $f(x)=x^{2}\pm px \pm q\;(p\geq 0,\;q\geq 2)$.  The contact
matrices (in table form) of disk-like tiles are classified by $f(x)$
and listed below.

{\tiny
\begin{table}[htbp]
\centering
\begin{tabular}{|c|c|c|c|c|c|c|c|c|}
\hline
&$v$&$Av$&$-v$&$-Av$&$Av-v$&$-Av-v$&$-Av+v$&$Av+v$\\
\hline
$v$&$0$&$q$&$0$&$0$&$q-1$&$0$&$0$&$q-1$\\
$Av$&$0$&$0$&$1$&$0$&$0$&$0$&$0$&$0$\\
$-v$&$0$&$0$&$0$&$q$&$0$&$q-1$&$q-1$&$0$\\
$-Av$&$1$&$0$&$0$&$0$&$0$&$0$&$0$&$0$\\
$Av-v$&$0$&$0$&$0$&$0$&$0$&$1$&$0$&$0$\\
$-Av-v$&$0$&$0$&$0$&$0$&$0$&$0$&$1$&$0$\\
$-Av+v$&$0$&$0$&$0$&$0$&$0$&$0$&$0$&$1$\\
$Av+v$&$0$&$0$&$0$&$0$&$1$&$0$&$0$&$0$\\
\hline
\end{tabular}
\vspace{0.2cm} \caption{$f(x)=x^{2}+q$.} \label{tab:x^2+q}
\end{table}}

{\tiny
\begin{table}[htbp]
\centering
\begin{tabular}{|c|c|c|c|c|c|c|c|c|}
\hline
&$v$&$Av$&$-v$&$-Av$&$Av-v$&$-Av+v$&$Av+v$&$-Av-v$\\
\hline
$v$&$0$&$q$&$0$&$0$&$q-1$&$0$&$q-1$&$0$\\
$Av$&$1$&$0$&$0$&$0$&$0$&$0$&$0$&$0$\\
$-v$&$0$&$0$&$0$&$q$&$0$&$q-1$&$0$&$q-1$\\
$-Av$&$0$&$0$&$1$&$0$&$0$&$0$&$0$&$0$\\
$Av-v$&$0$&$0$&$0$&$0$&$0$&$1$&$0$&$0$\\
$-Av+v$&$0$&$0$&$0$&$0$&$1$&$0$&$0$&$0$\\
$Av+v$&$0$&$0$&$0$&$0$&$0$&$0$&$1$&$0$\\
$-Av-v$&$0$&$0$&$0$&$0$&$0$&$0$&$0$&$1$\\
\hline
\end{tabular}
\vspace{0.2cm} \caption{$f(x)=x^{2}-q$.} \label{tab:x^2-q}
\end{table}}

{\tiny
\begin{table}[htbp]
\centering
\begin{tabular}{|c|c|c|c|c|c|c|c|c|}
\hline
&$v$&$Av$&$Av+v$&$-v$&$-Av$&$-Av-v$\\
\hline
$v$&$0$&$q$&$q-1$&$0$&$0$&$0$\\
$Av$&$0$&$0$&$0$&$0$&$0$&$1$\\
$Av+v$&$0$&$0$&$0$&$1$&$0$&$0$\\
$-v$&$0$&$0$&$0$&$0$&$q$&$q-1$\\
$-Av$&$0$&$0$&$1$&$0$&$0$&$0$\\
$-Av-v$&$1$&$0$&$0$&$0$&$0$&$0$\\
\hline
\end{tabular}
\vspace{0.2cm} \caption{$f(x)=x^{2}+x+q$.} \label{tab:x^2+x+q}
\end{table}}

{\tiny
\begin{table}[htbp]
\centering
\begin{tabular}{|c|c|c|c|c|c|c|c|c|}
\hline
&$v$&$Av$&$Av-v$&$-v$&$-Av$&$-Av+v$\\
\hline
$v$&$0$&$q$&$q-1$&$0$&$0$&$0$\\
$Av$&$0$&$0$&$1$&$0$&$0$&$0$\\
$Av-v$&$0$&$0$&$0$&$1$&$0$&$0$\\
$-v$&$0$&$0$&$0$&$0$&$q$&$q-1$\\
$-Av$&$0$&$0$&$0$&$0$&$0$&$1$\\
$-Av+v$&$1$&$0$&$0$&$0$&$0$&$0$\\
\hline
\end{tabular}
\vspace{0.2cm} \caption{$f(x)=x^{2}-x+q$.} \label{tab:x^2-x+q}
\end{table}}

{\tiny
\begin{table}[htbp]
\centering
\begin{tabular}{|c|c|c|c|c|c|c|c|c|}
\hline
&$v$&$Av+(p-1)v$&$Av+pv$&$-v$&$-Av-(p-1)v$&$-Av-pv$\\
\hline
$v$&$0$&$q-p+1$&$q-p$&$0$&$0$&$0$\\
$Av+(p-1)v$&$0$&$0$&$0$&$0$&$p-1$&$p$\\
$Av+pv$&$0$&$0$&$0$&$1$&$0$&$0$\\
$-v$&$0$&$0$&$0$&$0$&$q-p+1$&$q-p$\\
$-Av-(p-1)v$&$0$&$p-1$&$p$&$0$&$0$&$0$\\
$-Av-pv$&$1$&$0$&$0$&$0$&$0$&$0$\\
\hline
\end{tabular}
\vspace{0.2cm} \caption{$f(x)=x^{2}+px+q,\;p\geq 2,\;2p\leq
q+2$\;(excluding\;$p=q=2$).} \label{tab:x^2+px+q}
\end{table}}

{\tiny
\begin{table}[htbp]
\centering
\begin{tabular}{|c|c|c|c|c|c|c|c|c|}
\hline
&$v$&$Av-(p-1)v$&$Av-pv$&$-v$&$-Av+(p-1)v$&$-Av+pv$\\
\hline
$v$&$0$&$q-p+1$&$q-p$&$0$&$0$&$0$\\
$Av-(p-1)v$&$0$&$p-1$&$p$&$0$&$0$&$0$\\
$Av-pv$&$0$&$0$&$0$&$1$&$0$&$0$\\
$-v$&$0$&$0$&$0$&$0$&$q-p+1$&$q-p$\\
$-Av+(p-1)v$&$0$&$0$&$0$&$0$&$p-1$&$p$\\
$-Av+pv$&$1$&$0$&$0$&$0$&$0$&$0$\\
\hline
\end{tabular}
\vspace{0.2cm} \caption{$f(x)=x^{2}-px+q,\;p\geq 2,\;2p\leq
q+2$\;(excluding\;$p=q=2$).} \label{tab:x^2-px+q}
\end{table}}

{\tiny
\begin{table}[htbp]
\centering
\begin{tabular}{|c|c|c|c|c|c|c|c|c|}
\hline
&$v$&$Av+pv$&$Av+(p+1)v$&$-v$&$-Av-pv$&$-Av-(p+1)v$\\
\hline
$v$&$0$&$q-p$&$q-p-1$&$0$&$0$&$0$\\
$Av+pv$&$1$&$0$&$0$&$0$&$0$&$0$\\
$Av+(p+1)v$&$0$&$p$&$p+1$&$0$&$0$&$0$\\
$-v$&$0$&$0$&$0$&$0$&$q-p$&$q-p-1$\\
$-Av-pv$&$0$&$0$&$0$&$1$&$0$&$0$\\
$-Av-(p+1)v$&$0$&$0$&$0$&$0$&$p$&$p+1$\\
\hline
\end{tabular}
\vspace{0.2cm} \caption{$f(x)=x^{2}+px-q,\;p\geq 1,\;2p\leq q-2$.}
\label{tab:x^2+px-q}
\end{table}}

{\tiny
\begin{table}[htbp]
\centering
\begin{tabular}{|c|c|c|c|c|c|c|c|c|}
\hline
&$v$&$Av-pv$&$Av-(p+1)v$&$-v$&$-Av+pv$&$-Av+(p+1)v$\\
\hline
$v$&$0$&$q-p$&$q-p-1$&$0$&$0$&$0$\\
$Av-pv$&$1$&$0$&$0$&$0$&$0$&$0$\\
$Av-(p+1)v$&$0$&$0$&$0$&$0$&$p$&$p+1$\\
$-v$&$0$&$0$&$0$&$0$&$q-p$&$q-p-1$\\
$-Av+pv$&$0$&$0$&$0$&$1$&$0$&$0$\\
$-Av+(p+1)v$&$0$&$p$&$p+1$&$0$&$0$&$0$\\
\hline
\end{tabular}
\vspace{0.2cm} \caption{$f(x)=x^{2}-px-q,\;p\geq 1,\;2p\leq q-2$.}
\label{tab:x^2-px-q}
\end{table}}

\clearpage
\newpage

{\tiny
\begin{table}[htbp]
\centering
\begin{tabular}{|c|c|c|c|c|c|c|c|c|}
\hline
&$v$&$Av+v$&$Av+2v$&$-v$&$-Av-v$&$-Av-2v$\\
\hline
$v$&$0$&$1$&$0$&$0$&$0$&$0$\\
$Av+v$&$0$&$0$&$0$&$0$&$1$&$2$\\
$Av+2v$&$0$&$0$&$0$&$1$&$0$&$0$\\
$-v$&$0$&$0$&$0$&$0$&$1$&$0$\\
$-Av-v$&$0$&$1$&$2$&$0$&$0$&$0$\\
$-Av-2v$&$1$&$0$&$0$&$0$&$0$&$0$\\
\hline
\end{tabular}
\vspace{0.2cm} \caption{$f(x)=x^{2}+2x+2$.} \label{tab:x^2+2x+2}
\end{table}}

{\tiny
\begin{table}[htbp]
\centering
\begin{tabular}{|c|c|c|c|c|c|c|c|c|}
\hline
&$v$&$Av-v$&$Av-2v$&$-v$&$-Av+v$&$-Av+2v$\\
\hline
$v$&$0$&$1$&$0$&$0$&$0$&$0$\\
$Av-v$&$0$&$1$&$2$&$0$&$0$&$0$\\
$Av-2v$&$0$&$0$&$0$&$1$&$0$&$0$\\
$-v$&$0$&$0$&$0$&$0$&$1$&$0$\\
$-Av+v$&$0$&$0$&$0$&$0$&$1$&$2$\\
$-Av+2v$&$1$&$0$&$0$&$0$&$0$&$0$\\
\hline
\end{tabular}
\vspace{0.2cm} \caption{$f(x)=x^{2}-2x+2$.} \label{tab:x^2-2x+2}
\end{table}
}

\end{section}

\end{document}